\theoremstyle{definition}
\newtheorem{defi}{Definition}[section]
\newtheorem{rem}[defi]{Remark}
\newtheorem*{notation}{Notation}
\theoremstyle{plain}
\newtheorem{teo}[defi]{Theorem}
\newtheorem{lem}[defi]{Lemma}
\newtheorem{pro}[defi]{Proposition}
\newcommand{\N}{\mathbb{N}}
\newcommand{\vp}{\varphi}
\newcommand{\R}{\mathbb{R}}
\newcommand{\pp}{\partial}
\newcommand{\se}{\subseteq}
\newcommand{\ceq}{\coloneqq}
\newcommand{\pxu}{{\mathcal{P}^X_{\mathcal{J}_u}}}
\newcommand{\res} {\mathop{\hbox{\vrule height 7pt width .5pt depth 0pt \vrule height .5pt width 6pt depth 0pt}}\nolimits}
\newcommand{\leb}{\mathcal{L}}
\newcommand{\sbv}{\operatorname{SBV}}
\newcommand{\bv}{\operatorname{BV}}
\newcommand{\J}{\mathcal{J}}
\newcommand{\hau}{\mathcal{H}}
\newcommand{\nl}{\left\|}
\newcommand{\nr}{\right\|}
\newcommand{\loc}{\operatorname{loc}}
\newcommand{\ap}{{\operatorname{ap}}}
\newcommand{\s}{\mathcal{S}}
\newcommand{\p}{{\mathfrak{p}}}
\author[M. Di Marco]{Marco Di Marco}
\address{ETH Z\"urich, Department of Mathematics, R\"amistrasse 101, 8092 Z\"urich, Switzerland.\newline \indent 
Dipartimento di Matematica ``T.~Levi-Civita'', Università di Padova, via Trieste 63, 35121 Padova, Italy. }
\email{mdimarco@ethz.ch}
\title[Intr. perimeter, compactness and Poincaré ineq. for SBV functions in CC spaces]{Intrinsic perimeter, compactness and Poincaré inequality for SBV functions in Carnot-Carathéodory spaces}
\subjclass[2020]{26B30, 53C17, 49Q15, 30L15.}
\keywords{Special functions of bounded variation, Carnot-Carathéodory spaces, sub-Riemannian geometry, Poincaré inequality, perimeter measure}
\thanks{The author warmly thanks Sebastiano Don and Davide Vittone for many precious discussions and for an initial reading of this paper.  The author is supported by SNSF Starting Grant TMSGI2\textunderscore226018, University of Padova and GNAMPA of INdAM. Part of this work was written while the author was a guest of the Forschungsinstitut f\"ur Mathematik (FIM) in Z\"urich: he wishes to thank the FIM for the support as well as for the pleasant and exceptionally stimulating atmosphere. }
\begin{document}

\begin{abstract}
By introducing an intrinsic perimeter measure for intrinsic countably rectifiable sets, we prove a compactness result and a Poincaré inequality for special functions with bounded variation in equiregular Carnot-Carathéodory spaces which satisfy an additional natural assumption, called property $\mathcal R$. 
\end{abstract}

\maketitle

\section{Introduction}

One of the aims of this paper is to expand the theory of special functions with bounded variation in Carnot-Carathéodory spaces by proving the sub-Riemannian counterparts of two milestones of the theory of classical Euclidean $\sbv$ functions, namely the compactness theorem and the Poincaré inequality. In order to do so, we need to introduce an intrinsic perimeter measure for intrinsic countably rectifiable sets, which allows us to work in a natural framework and is of independent interest.

Before stating our main results, let us briefly recall the definitions of Carnot-Carathéodory spaces (CC spaces) and (special) functions with bounded variation ($\bv_X$, $\sbv_X$ functions). We refer the reader to Section \ref{sec_prel} for precise definitions.

A {\em Carnot-Carathéodory space} (see Definition \ref{def_cc}) is the space $\R^n$ endowed with a distance arising from a collection $X=(X_1,\dots,X_m)$ of smooth and linearly independent vector fields satisfying the H\"ormander condition. In this paper, we will deal with {\em equiregular} CC spaces, where a homogeneous dimension $Q$, usually larger than the topological dimension $n$, can be defined.

The space $\bv_X(\Omega)$~\cite{cdg,fssc} of functions with bounded $X$-variation (see Definition \ref{def_bvx}) consists of those functions $u$ on an open set $\Omega\se (\R^n,X)$ whose derivatives $X_1u, \dots, X_mu$ in the sense of distributions are represented by a vector-valued measure $D_Xu$ with finite total variation $|D_Xu|$. In the past few years a lot of effort was put into the study of $\bv_X$ functions: see for instance \cite{bu95,dgn98,fgw94,fssc01,fssc03,gn96,garofalonhieu,CapGarAhlfors,DanGarNhi,Selby} and the more recent \cite{agm15,am03,as10,bmp12,cm20,dmv19,magnani02,marchi14,sy03,vittone2012,dontesi,dv,dv19,sbvx}.

In \cite{dv} it was proven that, if $u \in \bv_X(\Omega)$, then $D_Xu$ can be decomposed as
\[
D_Xu=D^{\ap}_Xu\: \leb^n+D^s_Xu=D^{\ap}_Xu\: \leb^n+D^j_Xu+D^c_Xu,
\]
where $D^\ap_Xu$ is the approximate $X$-gradient of $u$, $\leb^n$ is the usual Lebesgue measure, $D_X^su$ is the singular part of $D_Xu$, $D^j_Xu$ is the jump part of $D_Xu$, and $D_X^cu$ is the Cantor part of $D_Xu$.

The space of $\sbv_X$ functions was introduced in \cite{sbvx}, where the following definition was given.

\begin{defi}\label{def_introsbvx}
 Let $\Omega\se\R^n$ be an open subset of an equiregular Carnot-Carathéodory space $(\R^n,X)$ and let $u \in \bv_X(\Omega)$. We say that $u$ is a \emph{special function of bounded $X$-variation}, and we write $u \in \sbv_X(\Omega)$, if
\begin{enumerate}
    \item[(i)] $D_X^c u=0$, and
    \item[(ii)] the jump set $\J_u$ of $u$ is a countably $X$-rectifiable set\footnote{Recall that a set is said to be countably $X$-rectifiable if it can be covered, up to a set which is negligible with respect to the Hausdorff measure $\hau^{Q-1}$, by a countable family of $C^1_X$-hypersurfaces (see Definition \ref{def_ipersup}).}.
\end{enumerate}
\end{defi}
However, in this paper we will work under the assumption that the CC space $(\R^n,X)$ satisfies the so-called property $\mathcal R$ (``rectifiability'', see Definition \ref{def_pror}) which, as proved in \cite[Theorem 1.5]{dv}, implies that condition (ii) in Definition \ref{def_introsbvx} is automatically satisfied for every $u \in \bv_X$. 
Let us stress that there is a plethora of examples of CC spaces which satisfy  property $\mathcal{R}$, such as Heisenberg groups, step 2 Carnot groups and Carnot groups of type $\star$, see \cite[Theorem 4.3]{dv}.

In Section \ref{sec_intrper} we present the first contribution of this paper: the definition of the \emph{intrinsic perimeter measure} for countably $X$-rectifiable sets (see Definition \ref{def_pxu}).
\begin{defi}
We define, for every countably $X$-rectifiable set $R \se (\R^n,X)$, the \emph{intrinsic perimeter measure} $\mathcal P^X_R$ as the unique measure on $R$ such that, for every $C^1_X$-hypersurface $S\se (\R^n,X)$, one has
\[
\mathcal P^X_R \res S =P^X_S \res R
\]
where $P^X_S$ is the (local) $X$-perimeter measure (see Definition \ref{def_xper}) of (each of) the two $X$-regular open sets (see \cite{vittone2012}) in which (locally) $S$ separates $(\R^n,X)$. By \cite{AmbrosioAhlfors}, the intrinsic perimeter measure $\mathcal P^X_R$ is also absolutely continuous with respect to $\hau^{Q-1}$.
\end{defi}
By using the intrinsic perimeter measure, one can reformulate some results from \cite{sbvx} in a more natural way, as the proposition below shows.

\begin{pro} Let $\Omega$ be an open subset of an equiregular Carnot-Carathéodory space $(\R^n,X)$ satisfying property $\mathcal R$. The following statements are equivalent:
    \begin{enumerate}
        \item[(i)] $u\in \sbv_X(\Omega)$;
        \item[(ii)] $u\in \bv_X(\Omega)$ and  
        \[
        D_Xu= D^\ap_Xu \leb^n+(u^+-u^-)\nu_{\J_u}\pxu
        \]
    \end{enumerate}
    where $u^\pm$ are the traces of $u$ on $\J_u$ (see Definition \ref{def_approxjump}) and $\nu_{\J_u}$ is the \emph{horizontal normal} of $\J_u$ (see Definition \ref{def_ipersup}). Observe that $\pxu$ does not depend on the function $u$ but only on the countably $X$-rectifiable jump set $\J_u$. 
\end{pro}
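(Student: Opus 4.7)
The plan is to reduce the statement to the decomposition $D_Xu = D^{\ap}_Xu\,\leb^n + D^j_Xu + D^c_Xu$ from \cite{dv} (valid for every $u\in\bv_X(\Omega)$), combined with two observations specific to our setting: under property $\mathcal R$, the jump set $\J_u$ is automatically countably $X$-rectifiable by \cite[Theorem 1.5]{dv}, so condition (ii) of Definition \ref{def_introsbvx} comes for free and membership in $\sbv_X(\Omega)$ reduces to $D^c_Xu = 0$. With this reduction, the proposition follows once one identifies the jump part $D^j_Xu$ with $(u^+ - u^-)\nu_{\J_u}\pxu$ for every $u\in\bv_X(\Omega)$.

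The heart of the argument will be this identification. I would exploit the countable $X$-rectifiability of $\J_u$ to partition it, up to an $\hau^{Q-1}$-negligible set, into pairwise disjoint Borel pieces $K_i$, each contained in a $C^1_X$-hypersurface $S_i$. On each $K_i$, the local representation of the jump part on a $C^1_X$-hypersurface established in \cite{sbvx} gives
\[
D^j_Xu \res K_i = (u^+ - u^-)\,\nu_{S_i}\, P^X_{S_i} \res K_i.
\]
By the defining property of $\pxu$ we have $P^X_{S_i}\res K_i = \pxu \res K_i$ and, on $K_i$, $\nu_{S_i}$ coincides with the horizontal normal $\nu_{\J_u}$. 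Summing over $i$ and using that $D^j_Xu$ is concentrated on $\J_u$ yields the claimed identity on the whole of $\Omega$.

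Once this identity is available, proving the equivalence becomes essentially bookkeeping. For (i)$\Rightarrow$(ii), I would insert $D^c_Xu = 0$ (which follows from the definition of $\sbv_X$) together with the formula just proved into the decomposition of \cite{dv}. For the converse, I would compare the hypothesis of (ii) with the general decomposition and invoke the uniqueness of the splitting into absolutely continuous, jump and Cantor parts to deduce $D^c_Xu = 0$; property $\mathcal R$ then gives rectifiability of $\J_u$ for free, hence $u\in\sbv_X(\Omega)$.

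The hard part will be the identification step, and in particular the check that the orientation conventions are compatible: the sides of $\J_u$ used to define the traces $u^\pm$ must be coherent with the orientation of the $X$-regular open sets employed to define $\pxu$. This is a local issue on each $K_i$ and should be handled via the results of \cite{sbvx,vittone2012}; once it is settled piecewise, countable additivity of the measures involved glues everything together.
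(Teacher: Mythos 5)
Your proposal is correct and follows the same route the paper takes (the paper dispenses with it in one sentence by citing \cite[Theorem~1.1]{dv} and rewriting \cite[Proposition~3.3]{sbvx} through the new measure $\pxu$); you simply unpack the content of those citations: the $D_Xu = D^{\ap}_Xu\,\leb^n + D^j_Xu + D^c_Xu$ decomposition, the fact that under property $\mathcal R$ membership in $\sbv_X$ reduces to $D^c_Xu=0$, and the piecewise identification of $D^j_Xu$ with $(u^+-u^-)\nu_{\J_u}\pxu$ via the local $X$-perimeter representation on covering $C^1_X$-hypersurfaces, which is exactly how $\pxu$ is built in Definition~\ref{def_pxu}. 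The orientation issue you flag is real but, as you say, is local and is already resolved in the cited representation theorems.
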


Having introduced the intrinsic perimeter measure $\pxu$, which is the \emph{natural} choice\footnote{Opposed to the Hausdorff measure, used in the classical Euclidean case.} to use in our sub-Riemannian generalizations, we can finally move to the presentation of the compactness theorem (Theorem \ref{teo_introcompact}) and the Poincaré inequality (Theorem \ref{teo_intropoincare})

The compactness theorem for classical $\sbv$ functions was first proved by L. Ambrosio in \cite{ambrosio89}. Later, several different approaches were developed: see for instance \cite{ambrosio95,albertimantegazza}. Our generalization of the compactness theorem for $\sbv_X$ functions is the following. 

\begin{teo}\label{teo_introcompact}
Let $\Omega$ be an open bounded subset of an equiregular Carnot-Carathéodory space $(\R^n,X)$ satisfying property $\mathcal R$. Let $(u_h)_{h \in \N}$ be a sequence of functions in $\sbv_X(\Omega)$, and assume that 
    \begin{itemize}
        \item[(i)] the functions $u_h$ are uniformly bounded in the $\bv_X$ norm (i.e., they are relatively compact with respect to the weak-$*$ topology of $\bv_X(\Omega)$\footnote{Recall that we say that $u_h$ converges to $u$ in the weak-$*$ topology of $\bv_X(\Omega)$ if $u_h\to u$ in $L^1(\Omega)$ and $\int_\Omega \vp dD_Xu_h \to \int_\Omega \vp dD_Xu$ for every $\vp \in C_c(\Omega)$.}),
        \item[(ii)] the approximate $X$-gradients $D^\ap_Xu_h$ are equi-integrable (i.e., they are relatively compact with respect to the weak topology of $L^1(\Omega,\R^m)$),
        \item[(iii)] there exists a function $f:[0,+\infty) \to [0,+\infty]$ such that 
        \[
        \lim_{t \to 0} \frac{f(t)}{t}\to +\infty,
        \]
        and
        \[
        \int_{\J_{u_h}}f(|u^+_h-u^-_h|)d\mathcal{P}^X_{\J_{u_h}} \leq C <+\infty.
        \]
    \end{itemize}
    Then, up to a subsequence, $(u_h)_{h \in \N}$ converges to a certain $u \in \sbv_X(\Omega)$. Moreover, the absolutely continuous part and the jump part of the derivatives converge separately in the weak\nobreakdash-$*$ topology of measures\footnote{Recall that we say that $\mu_h$ converges to $\mu$ in the weak-$*$ topology of measures if $\int_\Omega \vp d \mu_h \to \int_\Omega \vp d\mu$ for every $\vp \in C_c(\Omega)$.}.
\end{teo}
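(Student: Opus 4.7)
My plan is to follow Ambrosio's original blueprint for the Euclidean $\sbv$-compactness theorem \cite{ambrosio89,ambrosio95}, adapting each step to the sub-Riemannian setting. Property $\mathcal R$ will guarantee that the jump set of every $\bv_X$ limit is automatically countably $X$-rectifiable, while the intrinsic perimeter measure $\mathcal{P}^X$ introduced in Section~\ref{sec_intrper} will play the role that $\hau^{Q-1}$ plays in the classical Euclidean argument.

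From assumption (i) and the weak-$*$ compactness of $\bv_X$, I extract a (non-relabelled) subsequence such that $u_h\to u$ in $L^1_\loc(\Omega)$ and $D_X u_h \overset{*}{\rightharpoonup} D_X u$ for some $u\in\bv_X(\Omega)$. Assumption (ii) combined with the Dunford--Pettis theorem yields, up to a further subsequence, $D^\ap_X u_h \rightharpoonup g$ weakly in $L^1(\Omega,\R^m)$; in particular the absolutely continuous measures $D^\ap_X u_h\,\leb^n$ converge weakly-$*$ to $g\,\leb^n$. By difference, the jump measures
\[
J_h\ceq (u^+_h - u^-_h)\,\nu_{\J_{u_h}}\,\mathcal{P}^X_{\J_{u_h}}
\]
have uniformly bounded total variations, hence, up to further extraction, converge weakly-$*$ to a vector-valued Radon measure $J$. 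Passing to the limit in the decomposition of the proposition above gives $D_X u = g\,\leb^n + J$, so it remains to show $J = (u^+ - u^-)\,\nu_{\J_u}\,\mathcal{P}^X_{\J_u}$: this identification will force $D^c_X u = 0$ and, via property $\mathcal R$, yield $u \in \sbv_X(\Omega)$, while the separate weak-$*$ convergence of the absolutely continuous and jump parts follows by construction.

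The identification of $J$ is the crux of the argument, and I expect it to be the hardest step. I would proceed in two parts. On one hand, the $\bv_X$-chain rule available in the property-$\mathcal R$ setting (thanks to the fine decomposition of \cite{dv,sbvx}), applied to compositions $\phi\circ u_h$ with $\phi\in C^1(\R)$ concave and having bounded derivative, combined with assumption (iii), shows that small-amplitude jumps of $u_h$ cannot accumulate into a Cantor-type mass in the limit: the superlinear growth of $f$ at $0$ is used in a de la Vallée-Poussin-type argument to prove lower semicontinuity of functionals of the form $v\mapsto \int \theta(|v^+-v^-|)\,d\mathcal{P}^X_{\J_v}$ for suitably concave $\theta$, which forces the Cantor part of the limit to vanish. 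On the other hand, a blow-up at $\mathcal{P}^X_{\J_u}$-a.e. point of $\J_u$, performed in exponential coordinates adapted to the equiregular stratification (using the intrinsic blow-up theory for $C^1_X$-hypersurfaces \cite{vittone2012} and the absolute continuity $\mathcal{P}^X \ll \hau^{Q-1}$ from \cite{AmbrosioAhlfors}), identifies the density of $J$ with respect to $\mathcal{P}^X_{\J_u}$ as $(u^+-u^-)\nu_{\J_u}$ at $\mathcal{P}^X_{\J_u}$-a.e. point, while the singular part of $J$ must vanish.

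The main obstacle will indeed be this identification step: in the Euclidean setting classical tools (Besicovitch-type covering, one-dimensional slicing along coordinate directions, explicit blow-ups to half-spaces) are immediately available, whereas in the CC framework they must be replaced by their sub-Riemannian analogues, in particular horizontal slicing à la \cite{dontesi,dv} and the blow-up machinery for $C^1_X$-hypersurfaces. The use of $\mathcal{P}^X$ (rather than $\hau^{Q-1}$) is precisely what makes the blow-up yield a clean pointwise density without extra metric factors, which is why the construction of Section~\ref{sec_intrper} is essential to carry out the compactness proof in this setting.
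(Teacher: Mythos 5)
Your outline follows Ambrosio's original blueprint (semicontinuity of jump functionals and blow-up), not the Alberti--Mantegazza route that the paper actually takes, and the crucial step of your plan is left unjustified in a way that hides essentially all of the difficulty. The soft part is fine: extracting a subsequence with $u_h\to u$ in $L^1_\loc$, $D_X u_h\overset{*}{\rightharpoonup}D_X u$, $D^\ap_X u_h\rightharpoonup g$ weakly in $L^1$, and letting $J$ be the weak-$*$ limit of the jump measures so that $D_Xu=g\,\leb^n+J$. But a weak-$*$ limit of singular measures need not be singular, so this decomposition does not by itself yield $g=D^\ap_Xu$, $D^c_Xu=0$, or that $J$ is the jump part of $D_Xu$. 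Those three facts \emph{are} the theorem, and you propose to obtain them from (a) a closure/lower-semicontinuity theorem for functionals $v\mapsto\int\theta(|v^+-v^-|)\,d\mathcal P^X_{\J_v}$ with concave $\theta$, and (b) a blow-up at $\mathcal P^X_{\J_u}$-a.e.\ point of $\J_u$. Neither of these is available in the Carnot--Carath\'eodory setting: Ambrosio's closure theorem relies on one-dimensional slicing of $\bv$ functions along coordinate directions and Besicovitch covering arguments, and no analogue of that slicing machinery exists here, while the blow-up identification you invoke is likewise a substantial new result rather than a consequence of the cited references. You acknowledge this (``they must be replaced by their sub-Riemannian analogues''), but that is precisely the gap: the proof would require first establishing a CC version of Ambrosio's closure theorem, which is at least as hard as the compactness statement itself.

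The paper sidesteps all of this by following Alberti and Mantegazza \cite{albertimantegazza}. The ingredients are a $\bv_X$ chain rule $D_X(\phi\circ u)\res(\Omega\setminus\s_u)=\phi'(u^\p)\,D_Xu\res(\Omega\setminus\s_u)$, proved via the coarea formula (Lemma~\ref{lemma_chain}, Lemma~\ref{lemma_chain2}), together with the class $\Gamma(f)$ of Lipschitz $C^1$ functions with modulus controlled by $f$. For $\phi\in\Gamma(f)$ one gets the uniform bound $\|D_X(\phi\circ u_h)-\phi'(u_h^\p)D^\ap_Xu_h\,\leb^n\|\le\int_{\J_{u_h}}f(|u_h^+-u_h^-|)\,d\mathcal P^X_{\J_{u_h}}\le C$, which passes to the limit by weak-$*$ convergence; taking the supremum over $\phi\in\Gamma(f)$ and invoking \cite[Lemma~2.4]{albertimantegazza} (a purely measure-theoretic statement, Proposition~\ref{pro_aux}) forces the residual measure $(D^\ap_Xu-g)\leb^n+D^c_Xu$ to vanish. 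No slicing, no covering lemma, no blow-up. If you want a self-contained proof, this is the route to take; the one you sketch would only be complete after proving a CC analogue of Ambrosio's closure theorem, which does not currently exist.
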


The proof of Theorem \ref{teo_introcompact} is provided in Section \ref{sec_compactness}. We follow the approach of G. Alberti and C. Mantegazza (\cite{albertimantegazza}), i.e., we prove the compactness through the use of a chain rule and a characterization of the singular part of the derivatives of $\bv_X$ functions, which are also new in the setting of CC spaces.

The Poincaré inequality for classical $\sbv$ functions was proved by E. De Giorgi, M. Carriero and A. Leaci in \cite{dgcl}. In order to prove our generalization, Theorem \ref{teo_intropoincare}, we follow their approach, which relies on the use of a chain rule, the coarea formula and the isoperimetric inequality.

\begin{teo}\label{teo_intropoincare}
 Let $K$ be a compact subset of an equiregular Carnot-Carathéodory space $(\R^n,X)$ satisfying property $\mathcal R$. Then there exist constants $C_K>0$, $R_K>0$ such that for every $p \in K$, $r \in (0,R_K)$ and $u \in \sbv_X(B(p,r))$ with $\pxu(\J_u)$ sufficiently small one has
    \[
   \left( \int_{B(p,r)} |\bar u-m|^\frac{Q}{Q-1}d\leb^n \right)^\frac{Q-1}{Q}\leq 2 C_K \int_{B(p,r)}|D^\ap_Xu|d\leb^n,
    \]
    where $\bar u$ is a specific truncation\footnote{I.e., a function defined as $\bar u \ceq \max\{ \min\{u,\tau_2\},\tau_1\}$ for some precise choices of $\tau_1,\tau_2 \in \R$.} of $u$ and $m$ is any median for $u$ in $B(p,r)$.
\end{teo}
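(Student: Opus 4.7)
The plan is to follow the approach of De Giorgi--Carriero--Leaci, adapting it to the sub-Riemannian setting through the intrinsic perimeter $\pxu$ introduced in Section \ref{sec_intrper}. The tools I would invoke are: (a) the relative isoperimetric inequality on balls of an equiregular CC space, which furnishes the constants $C_K, R_K$ and requires $p\in K$, $r\in(0,R_K)$; (b) the coarea formula $|D_X u|(B) = \int_\R P_X(\{u>t\},B)\,dt$, classical in $\bv_X$; (c) an $\sbv_X$ chain rule, announced in the introduction as one of the paper's new tools; and (d) the $\sbv_X$ decomposition $D_X u = D^\ap_X u\,\leb^n + (u^+-u^-)\nu_{\J_u}\,\pxu$.

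Fix a truncation $\bar u = \max\{\min\{u,\tau_2\},\tau_1\}$ with $\tau_1 \leq m \leq \tau_2$. Because $m$ is a median of $u$, both $\{\bar u > m+s\}\cap B(p,r)$ and $\{\bar u < m-s\}\cap B(p,r)$ have Lebesgue measure at most $|B(p,r)|/2$ for every $s>0$. Combining the layer-cake identity with Minkowski's inequality, then applying the relative isoperimetric inequality to each super- and sub-level set, and finally the coarea formula to $\bar u$, one obtains
\[
\|\bar u - m\|_{L^{Q/(Q-1)}(B(p,r))} \leq C_K \int_{\tau_1}^{\tau_2} P_X(\{u>t\}, B(p,r))\,dt = C_K\,|D_X\bar u|(B(p,r)).
\]

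Next, applying the $\sbv_X$ chain rule to the piecewise-affine truncation $\phi(s)=\max\{\min\{s,\tau_2\},\tau_1\}$, one decomposes
\[
|D_X\bar u|(B(p,r)) = \int_{B(p,r)\cap\{\tau_1 < u < \tau_2\}} |D^\ap_X u|\,d\leb^n + \int_{\J_u\cap B(p,r)} |\phi(u^+) - \phi(u^-)|\,d\pxu.
\]
The first summand is bounded by $\int_{B(p,r)}|D^\ap_X u|\,d\leb^n$; the second by $(\tau_2-\tau_1)\,\pxu(\J_u)$. The crucial calibration is then to choose $\tau_1,\tau_2$ symmetric around $m$ with $\tau_2-\tau_1 \ceq \int_{B(p,r)}|D^\ap_X u|\,d\leb^n \big/ \pxu(\J_u)$, so that the two summands become equal, producing $|D_X\bar u|(B(p,r)) \leq 2\int_{B(p,r)}|D^\ap_X u|\,d\leb^n$ and hence the claimed estimate with constant $2C_K$. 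The smallness hypothesis on $\pxu(\J_u)$ enters precisely here: it guarantees that $\tau_2-\tau_1$ is large, preventing the truncation from being so aggressive that the inequality becomes vacuous.

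The main obstacle I foresee is establishing the $\sbv_X$ chain rule with sufficient generality to handle the non-smooth truncation $\phi$, including a clean computation of the $D^\ap_X$-part and jump-part decomposition of $\phi\circ u$; this is also what allows one to identify $|D_X\bar u|$ with the coarea integral of level-set perimeters only over $[\tau_1,\tau_2]$. A secondary point is verifying that the relative isoperimetric inequality on CC balls can be taken with a constant uniform over compact sets $K\subset(\R^n,X)$, which is a known consequence of the doubling and Poincaré properties of equiregular CC spaces. Once these ingredients are in place, the Euclidean scheme of De Giorgi--Carriero--Leaci adapts directly to the argument sketched above.
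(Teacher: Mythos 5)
Your proof is correct in the literal sense that it establishes the stated inequality, but it takes a genuinely different route from the paper's, and the difference is not cosmetic. The paper fixes $\tau_1,\tau_2$ via the non-decreasing rearrangement $u_*$, namely
\[
\tau_1 = u_*\bigl( (2C_K\pxu(\J_u))^{\frac{Q}{Q-1}} \bigr), \qquad \tau_2 = u_*\bigl( \leb^n(B)-(2C_K\pxu(\J_u))^{\frac{Q}{Q-1}} \bigr),
\]
i.e.\ as quantiles of $u$ depending only on $u$ and $\pxu(\J_u)$. The heart of the De Giorgi--Carriero--Leaci argument, which you bypass, is then to \emph{prove} that this quantile choice forces the jump contribution to be small: one combines coarea with the isoperimetric inequality to get the \emph{lower} bound $|D_X\bar u|(B)\geq 2(\tau_2-\tau_1)\pxu(\J_u)$ (the paper's equations \eqref{eq_stimaalto}--\eqref{eq_stimaalto2}), which together with the chain-rule upper bound gives $(\tau_2-\tau_1)\pxu(\J_u)\leq \nl D^\ap_X u\nr_{L^1}$ as a nontrivial consequence. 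You instead \emph{define} $\tau_2-\tau_1 \ceq \nl D^\ap_X u\nr_{L^1}/\pxu(\J_u)$ to make this identity hold by fiat, which is shorter and does give the asserted estimate. What you lose is the crucial side property of the quantile truncation: with the paper's choice, $\leb^n(\{u\neq\bar u\})\leq 2(2C_K\pxu(\J_u))^{Q/(Q-1)}$, so the truncation only alters $u$ on a set whose measure is controlled by $\pxu(\J_u)$. Your calibrated $\tau_1,\tau_2$ give no such control — if $\nl D^\ap_X u\nr_{L^1}$ is small you truncate away almost everything — and it is precisely this auxiliary bound that makes the inequality useful in applications (e.g.\ to free-discontinuity problems in the spirit of De Giorgi--Carriero--Leaci).

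A further inaccuracy: you assert that the smallness hypothesis on $\pxu(\J_u)$ is what keeps $\tau_2-\tau_1$ large and the inequality non-vacuous. In fact your argument does not use the smallness hypothesis at all (the estimate $\nl\bar u - m\nr_{L^{Q/(Q-1)}}\leq 2C_K\nl D^\ap_X u\nr_{L^1}$ holds for your choice of $\tau_i$ regardless). In the paper the smallness condition \eqref{eq_condhj} is needed for a different reason: it guarantees $(2C_K\pxu(\J_u))^{Q/(Q-1)}<\tfrac12\leb^n(B)$, so that the arguments of $u_*$ in \eqref{eq_tau1}--\eqref{eq_tau2} sit on opposite sides of $\tfrac12\leb^n(B)$ and hence $\tau_1\leq m\leq\tau_2$ for every median $m$. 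Finally, your point about the non-smooth truncation $\phi$ is well taken: the paper circumvents the lack of a Lipschitz chain rule for $\sbv_X$ by proving Proposition \ref{prop_maxmin} directly for $\max\{u,\lambda\}$ and $\min\{u,\lambda\}$ via coarea, rather than by approximating $\phi$ in $C^1\cap W^{1,\infty}$.
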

 The above constants $C_K,R_K$ are coming from the isoperimetric inequality in CC spaces (Theorem \ref{teo_isoineq}) which, as said before, is pivotal in the proof of Theorem \ref{teo_intropoincare}. Such proof, together with the precise statement of Theorem \ref{teo_intropoincare}, is provided in Section \ref{sec_poinc}.

\section{Notation and preliminary results}\label{sec_prel}

\begin{defi}\label{def_cc}
Let $1\leq m \leq n$ be integers and let $X=(X_1,\dots,X_m)$ be a $m$-tuple of smooth and linearly independent vector fields on $\R^n$. We say that an absolutely continuous curve $\gamma\colon[0,T] \to \R^n$ is an \emph{$X$-subunit path} joining $p$ and $q$ if $\gamma(0)=p$, $\gamma(T)=q$ and there exist $h_1,\dots h_m \in L^\infty([0,T])$ such that $\sum_{j=1}^m h_j^2 \leq 1$ and 
\[
\gamma'(t)=\sum_{j=1}^m h_j(t)X_j(\gamma(t)), \quad \text{for a.e.\ $t\in [0,T]$.}
\]
For every $p,q \in \R^n$ we  define  
\[
d(p,q) \ceq \inf \lbrace T>0: \text{ there exists an $X$-subunit path $\gamma$ joining $p$ and $q$}\rbrace,
\]
where we agree that $\inf \emptyset \ceq +\infty$.

By the Chow–Rashevskii Theorem, if for every $p \in \R^n$ the linear span of all iterated commutators of the vector fields $X_1,\dots, X_m$ computed at $p$ has dimension $n$ (i.e. $X_1,\dots,X_m$ satisfy the \emph{H\"ormander condition}), then $d$ is a distance: the latter means that for every couple of points of $\R^n$ there always exists a $X$-subunit path joining them. In this case we say that $(\R^n,X)$ is a \emph{Carnot-Carathéodory space} of \emph{rank} $m$ and $d$ is the associated \emph{Carnot-Carathéodory distance}.         

For every $p \in \R^n$ and for every $i \in \N$ we denote by $\mathfrak{L}^i(p)$ the linear span of all the commutators of $X_1,\dots,X_m$ up to order $i$ computed at $p$. We say that a Carnot-Carathéodory space $(\R^n,X)$ is \emph{equiregular} if there exist natural numbers $n_0,n_1,\dots,n_s$ such that 
\[
0=n_0<n_1<\cdots<n_s=n \text{ and } \dim  \mathfrak{L}^i(p)=n_i, \quad \forall p \in \R^n, \forall i\in\{1,\dots, s\}.
\]
 The natural number $s$ is called \emph{step} of the Carnot-Carathéodory space. If $(\R^n,X)$ is equiregular, then the \emph{homogeneous dimension} is $Q \ceq \sum_{i=1}^si(n_i-n_{i-1})$.
\end{defi}

\begin{notation}
 In the following, $(\R^n,X)$ denotes an equiregular Carnot-Carathéodory space associated with the family $X=(X_1,\dots,X_m)$ satisfying property $\mathcal R$ (see Definition \ref{def_pror}). We use $d$ to denote the Carnot-Carathéodory distance associated with $X$, $B(\cdot,\cdot)$ to denote the associated open balls, $\hau^k$ to denote the associated Hausdorff $k$-measure and  $\leb^n$ to denote the usual Lebesgue measure. By $\Omega \se (\R^n,X)$ we denote a fixed open set and by $Q$ we denote the homogeneous dimension of $(\R^n,X)$. Later we will also use the following notation: for every $1 \leq i \leq m$ and $x \in \R^n$ we write $X_i(x)=(a_{i,1}(x),\dots,a_{i,n}(x))$ where $a_{i,t} \in C^\infty(\R^n)$ for $1 \leq t \leq n$. We also denote by $X_i^*$ the formal adjoint of $X_i$, i.e., for every $\vp \in C^1(\Omega)$, $x \in \Omega$ we write
\[
(X_i^*\vp)(x) \ceq \sum_{t=1}^n \frac{\pp (a_{i,t}\vp)}{\pp x_t}(x).
\]
\end{notation}
\begin{defi}\label{def_bvx}
We say that $u \in L^1_{\loc}(\Omega)$ is a \emph{function of locally bounded $X$-variation}, and we write $u \in \bv_{X,\loc}(\Omega)$, if there exists a $\R^m$-valued Radon measure $D_X u=(D_{X_1}u,\dots D_{X_m}u)$ on $\Omega$ such that, for every open set $A \subset \subset \Omega$, for every $1 \leq i \leq m$ and for every $\vp \in C^1_c(A)$ one has 
\[
\int_A \vp d(D_{X_i}u)=-\int_A u X_i^*\vp d \leb^n.
 \]
Moreover, if $u \in L^1(\Omega)$ and $D_X u$ has bounded total variation $|D_X u|$, then we say that $u$ has \emph{bounded $X$-variation} and we write $u \in \bv_X(\Omega)$. For every $u \in \bv_X(\Omega)$ we define the norm
\[
\nl u \nr_{\bv_X(\Omega)} \ceq \nl u \nr_{L^1(\Omega)}+|D_Xu|(\Omega).
\]
The space $\bv_X(\Omega)$ equipped with the above norm is a Banach space.
\end{defi}

\begin{defi}\label{def_xper}
    We say that a measurable set  $E \se \R^n$ has \emph{locally finite $X$-perimeter} (respectively, \emph{finite $X$-perimeter}) in $\Omega$ if its characteristic function $\chi_E$ belongs to $\bv_{X,\loc}(\Omega)$ (respectively, $\chi_E \in \bv_X(\Omega)$). 
    In such a case we define the \emph{$X$-perimeter measure} $P^X_E$ of $E$ as $P^X_E:=|D_X \chi_E|$. In the following we will sometimes use the notation $P_X(E,\cdot)$ instead of $P^X_E(\cdot)$.
\end{defi}

\begin{defi}\label{def_pror}
      We say that $(\R^n,X)$ satisfies the \emph{property $\mathcal{R}$} if for every open set $\Omega \se \R^n$ and every $E \se \R^n$ with locally finite $X$-perimeter in $\Omega$, the essential boundary $\partial^* E \cap \Omega$ is countably $X$-rectifiable. Let us recall for completeness that the \emph{essential boundary} of a measurable set $E$ is $\partial^*E \ceq \R^n \setminus (E_0 \cup E_1)$, where for $\lambda \in [0,1]$ we denote by $E_\lambda$ the set of points $p\in\R^n$ where $E$ has density $\lambda$, i.e., 
    \[
    \lim_{r \to 0}\frac{\leb^n(E \cap B(p,r))}{\leb^n(B(p,r))}=\lambda.
    \]
    \end{defi}

\begin{defi}\label{def_applim}
    Let $u \in L^1_{\loc}(\Omega)$, $z \in \R$ and $p \in \Omega$. We say that $z$ is the \emph{approximate limit} of $u$ at $p$ if
    \[
    \lim_{r \to 0} \frac{1}{\leb^n(B(p,r))} \int_{B(p,r)}|u-z|d\leb^n=0.
    \]
If the approximate limit of $u$ at $p$ exists, it is also unique (see \cite[Definition 2.19]{dv}). We hence denote by $u^\star(p)$ the approximate limit of $u$ at $p$ and by $\mathcal{S}_u$ the subset of points in $\Omega$ where $u$ does not admit an approximate limit. 
\end{defi}

\begin{defi}\label{def_c1x}
Let $\Omega \se (\R^n,X)$ be an open set and $f\colon \Omega \to \R$. We say that $f \in C^1_X(\Omega)$ if $f$ is continuous and its \emph{horizontal gradient}  $Xf \ceq (X_1f,\dots,X_mf)$,   in the sense of distributions, is represented by a continuous function.
\end{defi}

\begin{defi}\label{def_approxdiff}
    Let $u \in L^1_{\loc}(\Omega)$ and $p \in \Omega \setminus \mathcal{S}_u$. We say that $u$ is \emph{approximately $X$-differentiable} at $p$ if there exist a neighbourhood $U\subset \Omega$ of $p$ and $f \in C^1_X(U)$ such that $f(p)=0$ and
    \[
    \lim_{r \to 0} \frac{1}{\leb^n(B(p,r))} \int_{B(p,r)} \frac{|u-u^\star (p)-f|}{r}d \leb^n =0.
    \]
The vector $Xf(p) \in \R^m$ is uniquely determined (see~\cite[Proposition 2.30]{dv}): we call it \emph{approximate $X$-gradient} of $u$ at $p$ and we denote it by $D^\ap_X u(p)$.
\end{defi}

\begin{defi}\label{def_ipersup}
We say that $S \se (\R^n,X)$ is a \emph{$C^1_X$-hypersurface} if for every $p \in S$ there exist $r>0$ and $f \in C^1_X(B(p,r))$ such that the following facts hold:
\begin{enumerate}
\item[(i)]$ S \cap B(p,r)=\lbrace q \in B(p,r):f(q)=0 \rbrace$,
\item[(ii)]$Xf\neq 0$ on $B(p,r)$.
\end{enumerate}
We define the \emph{horizontal normal} to $S$ at $p \in S$ as
\[
\nu_S(p) \ceq \frac{Xf(p)}{|Xf(p)|}.
\]
Notice that $\nu_S(p)$ is well defined up to a sign and, in particular, it does not depend on the choice of $f$, see \cite[Corollary 2.14]{dv}. We say that $S  \se (\R^n,X)$ is \emph{countably $X$-rectifiable} if there exists a family $\lbrace S_h: h \in \N \rbrace$ of $C^1_X$-hypersurfaces such that
\[
\hau^{Q-1}\left( S \setminus \bigcup_{h \in \N}S_h \right)=0.
\]
  Moreover, if $\hau^{Q-1}(S)<+\infty$, we say that $S$ is \emph{$X$-rectifiable}. We define the \emph{horizontal normal} of a countably $X$-rectifiable set $S$ at $p \in S$ as 
\[
\nu_S(p) \ceq \nu_{S_h}(p) \text{ if }p \in S_h \setminus \bigcup_{k<h}S_k.
\]
Notice that $\nu_S$ is well defined, up to a sign, $\hau^{Q-1}$-a.e., see \cite[Proposition 2.18]{dv}.
\end{defi}

\begin{defi}\label{def_approxjump}
Fix $p \in (\R^n,X)$, $R>0$ and $\nu \in \mathbb{S}^{m-1}$. Let $f \in C^1_X(B(p,R))$ be such that $f(p)=0$ and $\frac{Xf(p)}{|Xf(p)|}=\nu$. For every $r \in (0,R)$ we set
\[
B^+_\nu(p,r) \ceq B(p,r) \cap \lbrace f>0\rbrace,\qquad
B^-_\nu(p,r) \ceq B(p,r) \cap \lbrace f<0\rbrace.
\]
Now let $u \in L^1_{\loc}(\Omega)$ and $p \in \Omega$. We say that $u$ has an \emph{approximate $X$-jump} at $p$ if there exist $u^+,u^- \in \R$ with $u^+ \neq u^-$ and $\nu \in \mathbb{S}^{m-1}$ such that
\begin{equation}\label{eq_xjump}
\lim_{r \to 0}\frac{1}{\leb^n(B^+_\nu(p,r))}  \int_{B^+_\nu(p,r)} |u-u^+|d\leb^n=\lim_{r \to 0} \frac{1}{\leb^n(B^-_\nu(p,r))}\int_{B^-_\nu(p,r)} |u-u^-|d\leb^n=0.
\end{equation}
The \emph{jump set} $\J_u$ is defined as the set of points where $u$ has an approximate $X$-jump. Notice that condition \eqref{eq_xjump} does not depend on the choice of the function $f$ used to construct the sets  ${B^+_\nu(p,r)}$ and ${B^-_\nu(p,r)}$, see \cite[Proposition 2.26 and Remark 2.27]{dv}.
\end{defi}

\begin{rem}\label{rem_proprietaR}
Let us stress that since we are assuming that property $\mathcal{R}$ holds we have, by \cite[Theorem 1.5]{dv}, that the discontinuity set $\s_u$ (and hence also the jump set $\J_u$) is countably $X$-rectifiable for every $u \in \bv_X$. Moreover, $\hau^{Q-1}(\s_u \setminus \J_u)=0$, see \cite[Theorem 1.5]{dv}.
\end{rem}

\begin{defi}
    Let $u \in \bv_X(\Omega)$. We define $\hau^{Q-1}$-a.e. the \emph{precise representative} $u^\p$ of $u$ as
    \[
    u^\p \ceq \begin{cases}
        u^\star \text{ on } \Omega \setminus \s_u\\
       \displaystyle \frac{u^++u^-}{2} \text{ on } \J_u,        
    \end{cases}
    \]
    where $u^\pm$ are as in Definition \ref{def_approxjump}. Moreover, by \cite[Theorem 3.14]{dv}, one has 
   \[
 \lim_{r \to 0} \frac{1}{\leb^n(B(p,r))}  \int_{B(p,r)} u d\leb^n = u^\p(p) \qquad \text{ for $\hau^{Q-1}$-a.e. $p \in \Omega$}.
   \]
    \end{defi}
\begin{defi} 
For every $u \in \bv_X(\Omega)$ we decompose
\[
D_Xu=D_X^au+D_X^su
\]
where $D_X^au$ denotes the \emph{absolutely continuous part} of $D_Xu$ (with respect to the usual Lebesgue measure $\leb^n$) and $D_X^su$ denotes the \emph{singular part} of $D_Xu$. We define the \emph{jump part} of $D_Xu$ as 
\[
D_X^ju \ceq D_X^s u \res \J_u
\]
and the \emph{Cantor part} of $D_Xu$ as
\[
D_X^cu \ceq D_X^s u \res (\Omega \setminus \J_u).
\]

 We say that $u$ is a \emph{special function of bounded $X$-variation}\footnote{Let us observe that the definition is different from the one provided in \cite{sbvx} since, by assuming property $\mathcal{R}$, the jump set is always countably $X$-rectifiable.}, and we write $u \in \sbv_X(\Omega)$, if $D_X^cu=0$.
\end{defi}

We conclude this section by recalling two results that will be pivotal in the sequel: the coarea formula and the isoperimetric inequality. From now on we will make use of the following notation: given a function $u:O \to \R$ and $t \in \R$ we will use the compact notation $\{u>t\}$, $\{u=t\}$, etc., to denote the sets $\{p \in O:u(p)>t\}$, $\{p \in O:u(p)=t\}$, etc.

\begin{teo}[{\cite[Theorem 2.3.5]{fssc}}]\label{teo_coarea1}
Let $u \in \bv_X(\Omega)$. Then $\{u>t\}$ has finite $X$-perimeter for $\leb^1$-a.e. $t \in \R$ and
\[
|D_Xu|(\Omega)=\int_{-\infty}^{+\infty} P_X(\{u >t\},\Omega)dt.
\]
Conversely, if $u \in L^1(\Omega)$ and 
\[
\int_{-\infty}^{+\infty} P_X(\{u >t\},\Omega)dt<+\infty,
\]
then $u \in \bv_X(\Omega)$.
\end{teo}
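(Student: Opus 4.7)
The plan is to split the theorem into the direct inequality $|D_Xu|(\Omega)\ge \int_\R P_X(\{u>t\},\Omega)\,dt$, the reverse inequality, and the converse statement, treating the three in this order.

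For the direct inequality I would first settle the case $u\in C^\infty(\Omega)\cap\bv_X(\Omega)$, where $|D_Xu|(\Omega)=\int_\Omega|Xu|\,d\leb^n$. On the open set $\{|\nabla u|\neq 0\}$ each level set $\{u=t\}$ is a smooth Euclidean hypersurface and Vol'pert's argument (or a direct computation from Definition \ref{def_xper} via the divergence theorem) gives
\[
P_X(\{u>t\},\Omega)=\int_{\{u=t\}}\frac{|Xu|}{|\nabla u|}\,d\hau^{n-1}
\]
for $\leb^1$-a.e.\ $t$, while on $\{|\nabla u|=0\}$ the Euclidean coarea formula shows the contribution to both sides is zero. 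Federer's classical coarea applied to $|Xu|$ then yields
\[
\int_\Omega|Xu|\,d\leb^n=\int_\R\int_{\{u=t\}}\frac{|Xu|}{|\nabla u|}\,d\hau^{n-1}\,dt=\int_\R P_X(\{u>t\},\Omega)\,dt,
\]
so the formula holds with equality on the smooth class. For a general $u\in\bv_X(\Omega)$ I would invoke the Meyers--Serrin type approximation in $\bv_X$ available in the CC setting (\cite{fssc}): pick $u_h\in C^\infty(\Omega)\cap\bv_X(\Omega)$ with $u_h\to u$ in $L^1(\Omega)$ and $|D_Xu_h|(\Omega)\to|D_Xu|(\Omega)$. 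By Cavalieri, up to a subsequence, $\chi_{\{u_h>t\}}\to\chi_{\{u>t\}}$ in $L^1(\Omega)$ for $\leb^1$-a.e.\ $t$, and then the lower semicontinuity of the $X$-perimeter (immediate from its definition as a supremum over $C^1_c$ test functions) gives
\[
P_X(\{u>t\},\Omega)\le\liminf_{h\to\infty}P_X(\{u_h>t\},\Omega)\quad\text{for a.e.\ }t.
\]
Fatou combined with the smooth-case identity then produces
\[
\int_\R P_X(\{u>t\},\Omega)\,dt\le\liminf_{h\to\infty}|D_Xu_h|(\Omega)=|D_Xu|(\Omega).
\]

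For the reverse inequality I would use the layer cake decomposition together with a Minkowski-type bound for the $X$-total variation. For every $\varphi\in C^1_c(\Omega,\R^m)$ with $\|\varphi\|_\infty\le 1$, writing $u^+ = \int_0^\infty\chi_{\{u>t\}}\,dt$ and $u^- = \int_{-\infty}^0(1-\chi_{\{u>t\}})\,dt$ and interchanging the order of integration (legitimate because $u\in L^1$), one obtains
\[
\int_\Omega u\,\sum_{i=1}^m X_i^*\varphi_i\,d\leb^n=\int_\R\!\!\int_\Omega\chi_{\{u>t\}}\sum_{i=1}^m X_i^*\varphi_i\,d\leb^n\,dt,
\]
so taking absolute values, applying the definition of $P_X(\{u>t\},\Omega)$, and finally taking the supremum over $\varphi$ yields
\[
|D_Xu|(\Omega)\le\int_\R P_X(\{u>t\},\Omega)\,dt.
\]
Combining the two inequalities proves the equality. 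For the converse, if the right-hand side is finite and $u\in L^1(\Omega)$, the same Fubini/duality computation shows that the distributional derivatives $X_iu$ extend to bounded linear functionals on $C_c(\Omega)$, hence are represented by finite Radon measures, so $u\in\bv_X(\Omega)$.

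The genuinely delicate step is the treatment of the smooth case, because the horizontal gradient $Xu$ sees only part of the Euclidean gradient and one must control what happens on the critical set $\{\nabla u=0\}$; the cleanest way is to observe that on this set $|Xu|=0$ as well, so that both sides of the desired identity vanish after applying the Euclidean coarea formula to $|Xu|$. Everything else is a routine, though careful, combination of Meyers--Serrin density in $\bv_X$, lower semicontinuity of $P_X$, Fatou and Fubini.
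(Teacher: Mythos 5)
The paper does not prove this statement --- it is imported wholesale as a cited result (\cite[Theorem 2.3.5]{fssc}), so there is no in-paper proof to compare yours against. That said, your reconstruction is correct and follows the standard Fleming--Rishel route which is what the cited reference uses. The layer-cake/Fubini argument for the easy inequality $|D_Xu|(\Omega)\le\int_\R P_X(\{u>t\},\Omega)\,dt$ and for the converse is exactly right, and the harder inequality via the smooth case plus Meyers--Serrin density (\cite[Theorem 2.2.2]{fssc}, which the paper itself invokes in the proof of Lemma \ref{lemma_chain}), a.e.\ $L^1$-convergence of $\chi_{\{u_h>t\}}$ along a subsequence, lower semicontinuity of $P_X$ and Fatou is the canonical chain. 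The one place worth tightening is the smooth case: rather than splitting $\Omega$ into $\{\nabla u\neq 0\}$ and $\{\nabla u=0\}$, it is cleaner to invoke Sard's theorem and say that for $\leb^1$-a.e.\ $t$ the level set $\{u=t\}$ contains no critical points of $u$, so $\partial\{u>t\}\cap\Omega$ is a genuine $C^\infty$ hypersurface on which the divergence theorem gives $P_X(\{u>t\},\Omega)=\int_{\{u=t\}}|Xu|/|\nabla u|\,d\hau^{n-1}$; combined with the observation that $|Xu|$ vanishes wherever $\nabla u$ does (since $X_iu=a_i\cdot\nabla u$), Federer's coarea formula applied to $|Xu|$ then yields the smooth-case identity. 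Your ``both contributions vanish'' remark carries the right intuition, but as phrased the decomposition of $\{u>t\}$ into the two regions is not literal and should be replaced by the Sard argument above.
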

Let us observe that also the following version of the coarea formula holds. Although the proof is standard, we provide it for the sake of completeness.
\begin{teo}\label{teo_coarea2}
    Let $u \in \bv_X(\Omega)$. Then for every Borel set $A \se \Omega$ we have
\[
D_Xu(A)=\int_{-\infty}^{+\infty} D_X \chi_{\{u >t\}}(A) dt.
\]
\end{teo}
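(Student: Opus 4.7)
I would reduce the claim componentwise: fix $i \in \{1,\dots,m\}$ and show that for every $\varphi \in C^1_c(\Omega)$ one has
\[
\int_\Omega \varphi\, dD_{X_i}u = \int_{-\infty}^{+\infty}\!\!\int_\Omega \varphi\, dD_{X_i}\chi_{\{u>t\}}\, dt,
\]
and then let $\varphi$ increase to $\chi_\Omega$. The test-function identity is the heart of the argument; the passage to the evaluation on $\Omega$ is a routine dominated-convergence step using Theorem \ref{teo_coarea1}.

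For the test-function identity, I would start from the distributional definition $\int_\Omega \varphi\, dD_{X_i}u = -\int_\Omega u\, X_i^*\varphi\, d\leb^n$ and apply the layer-cake representation
\[
u(x) = \int_0^{+\infty}\!\chi_{\{u>t\}}(x)\, dt \;-\; \int_{-\infty}^{0}\!\bigl(1-\chi_{\{u>t\}}(x)\bigr)\, dt.
\]
Since $u \in L^1(\Omega)$ and $X_i^*\varphi \in L^\infty(\Omega)$ is compactly supported, Fubini applies. The key cancellation is that $\int_\Omega X_i^*\varphi\, d\leb^n = 0$: indeed $X_i^*\varphi = \sum_t \partial_{x_t}(a_{i,t}\varphi)$ is a classical divergence of a compactly supported vector field, hence integrates to zero. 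This lets me combine the two pieces of the layer cake into the single integral $\int_{-\infty}^{+\infty}\!\int_\Omega \chi_{\{u>t\}} X_i^*\varphi\, d\leb^n\, dt$, which equals $-\int_{-\infty}^{+\infty}\!\int_\Omega \varphi\, dD_{X_i}\chi_{\{u>t\}}\, dt$ by the distributional definition applied to $\chi_{\{u>t\}}$. The latter integrand is defined for $\leb^1$-a.e.\ $t$ because $\{u>t\}$ has finite $X$-perimeter for a.e.\ $t$ by Theorem \ref{teo_coarea1}, which is all that is needed.

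For the final step I would pick $\varphi_k \in C^1_c(\Omega)$ with $0 \le \varphi_k \le 1$ and $\varphi_k \to 1$ pointwise on $\Omega$. Since $|D_{X_i}u|(\Omega) \le |D_Xu|(\Omega)<+\infty$, dominated convergence gives $\int_\Omega \varphi_k\, dD_{X_i}u \to D_{X_i}u(\Omega)$; for the right-hand side, the pointwise (in $t$) convergence $\int_\Omega \varphi_k\, dD_{X_i}\chi_{\{u>t\}} \to D_{X_i}\chi_{\{u>t\}}(\Omega)$ is dominated by the $t$-integrable function $P_X(\{u>t\},\Omega)$ thanks to Theorem \ref{teo_coarea1}, so a second application of dominated convergence completes the proof.

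I do not expect a genuine obstacle: the only care is that each component $D_{X_i}u$ is only a signed finite measure (not positive), so Fubini and dominated convergence must be applied to its Jordan decomposition; this is standard because the total variation $|D_{X_i}u|$ is finite. The cancellation $\int X_i^*\varphi\, d\leb^n = 0$, although innocuous, is the step that deserves an explicit sentence, since it is exactly what allows the layer-cake splitting over $(0,\infty)$ and $(-\infty,0)$ to be merged into a single integral over $\R$.
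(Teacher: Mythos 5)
Your proposal follows the same route as the paper: layer-cake decomposition of $u$, Fubini, and the observation that $\int_\Omega X_i^*\varphi\,d\leb^n=0$ (divergence of a compactly supported vector field) is exactly what lets the $(0,+\infty)$ and $(-\infty,0)$ pieces merge into a single integral $\int_\Omega \varphi\,dD_{X_i}\chi_{\{u>t\}}$. You are somewhat more explicit than the paper about the final passage from the test-function identity to the evaluation at $\Omega$ (dominated convergence using $P_X(\{u>t\},\Omega)$ as the $t$-integrable majorant from Theorem \ref{teo_coarea1}) and about the Jordan-decomposition care needed for Fubini with signed measures — both correct and worth a sentence, though the paper leaves them implicit.
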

\begin{proof}
    Let $\vp \in C^1_c(\Omega)$ and $i \in \{1,\dots,m\}$. We have
    \begin{align*}
        \int_\Omega \vp dD_{X_i}u&=-\int_\Omega u(x)X^*_i\vp(x)dx\\
        &=-\int_\Omega \left( \int_0^{+\infty} \chi_{\{u>t\}}(x)dt \right)X^*_i\vp(x)dx+\int_\Omega \left( \int_{-\infty}^0 (1- \chi_{\{u>t\}})(x)dt \right)X^*_i\vp(x)dx \\
        &=-\int_0^{+\infty} \left( \int_\Omega \chi_{\{u>t\}}(x)X^*_i\vp(x)dx \right)dt+\int_{-\infty}^0 \left(\int_\Omega  (1- \chi_{\{u>t\}})(x)X^*_i\vp(x)dx \right)dt\\
        &= \int_{-\infty}^{+\infty} \left( \int_\Omega \vp dD_{X_i} \chi_{\{ u>t \} } \right)dt,
    \end{align*}
    where we used Fubini's theorem and the fact that, for every positive Borel function $v$, we have $v(x)=\int_0^{+\infty} \chi_{ \{ v>t \} }(x)dt$.
\end{proof}

For a proof of the following isoperimetric inequality in Carnot-Carathéodory spaces see for instance \cite[Theorem 2.42]{dv} and the references therein.
\begin{teo}\label{teo_isoineq}
    Let $K \se (\R^n,X)$ be a compact set. Then there exists $C_K>0$ and $R_K>0$ such that, for every $p \in K$, $r \in (0,R_K)$ and every $\leb^n$-measurable set $E$, one has
    \[
    \min \{ \leb^n(E \cap B(p,r)),\leb^n(B(p,r) \setminus E)    \}^\frac{Q-1}{Q} \leq C_KP_X(E,B(p,r)).
    \]
\end{teo}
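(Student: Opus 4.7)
The plan is to deduce the relative isoperimetric inequality as the special case $u=\chi_E$ of a Sobolev--Poincar\'e inequality for $\bv_X$ functions on small CC-balls, which in turn is bootstrapped from a local $(1,1)$-Poincar\'e inequality and the local volume growth. The two standard local ingredients from the theory of equiregular CC spaces that I would assemble are: (i) the Nagel--Stein--Wainger two-sided estimate $c_Kr^Q\le \leb^n(B(p,r))\le C_Kr^Q$ for $p\in K$ and $r\in(0,R_K)$, which in particular yields local doubling; and (ii) Jerison's local $(1,1)$-Poincar\'e inequality, namely that for some $\lambda\ge 1$ and for every $u\in\bv_X(B(p,\lambda r))$,
\[
\int_{B(p,r)} |u-u_{B(p,r)}|\,d\leb^n \le Cr\,|D_Xu|(B(p,\lambda r)).
\]
Jerison's inequality is by far the deepest input, and is where I expect \textbf{the main obstacle} to lie if one aimed for a self-contained treatment: its proof rests on subelliptic $L^2$ estimates together with a subunit-path representation of balls, and the alternative chain-type arguments also require substantial technical work.

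Feeding (i) and (ii) into the Haj\l{}asz--Koskela self-improvement theorem for Poincar\'e inequalities on doubling spaces yields the $(Q/(Q-1),1)$-Sobolev--Poincar\'e inequality
\[
\left(\int_{B(p,r)} |u-u_{B(p,r)}|^{Q/(Q-1)}\,d\leb^n\right)^{(Q-1)/Q}\le C_K'\,|D_Xu|(B(p,\lambda r)),
\]
where the scaling factor $r/\leb^n(B(p,r))^{1/Q}$ produced by the iteration has been absorbed into $C_K'$ via (i). Applying this inequality to $u=\chi_E$ and writing $a\ceq\leb^n(E\cap B(p,r))$, $b\ceq\leb^n(B(p,r)\setminus E)$ and $m\ceq a/(a+b)$, a direct computation gives
\[
\int_{B(p,r)} |\chi_E-m|^{Q/(Q-1)}\,d\leb^n = \frac{ab\bigl(a^{1/(Q-1)}+b^{1/(Q-1)}\bigr)}{(a+b)^{Q/(Q-1)}} \ge 2^{-Q/(Q-1)}\min\{a,b\},
\]
where the lower bound uses $a^{1/(Q-1)}+b^{1/(Q-1)}\ge ((a+b)/2)^{1/(Q-1)}$ together with $ab/(a+b)\ge \tfrac12\min\{a,b\}$. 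Combining the two displays produces the desired isoperimetric estimate, albeit with $B(p,\lambda r)$ on the right-hand side.

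Finally, to recover the stated inequality with $B(p,r)$ on both sides, I would exploit the fact that CC-balls of equiregular H\"ormander systems are uniformly John domains on the scale $r\in(0,R_K)$; a Boman-chain/Vitali argument combined with local doubling then promotes the Sobolev--Poincar\'e inequality to its same-ball version, equivalently the relative isoperimetric inequality with $B(p,r)$ on both sides. After rescaling $R_K\leftarrow R_K/\lambda$ and absorbing all dimensional constants into $C_K$, this produces constants $C_K,R_K$ depending only on $K$, which completes the proof.
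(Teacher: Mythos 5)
The paper does not actually prove Theorem \ref{teo_isoineq}: it is stated as a known result, with the remark ``For a proof of the following isoperimetric inequality in Carnot-Carath\'eodory spaces see for instance [Theorem 2.42]\cite{dv} and the references therein.'' So there is no in-paper argument to compare your proposal against; the theorem is imported as a black box.

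That said, your sketch is a correct outline of how the result is established in the cited literature. The route (Nagel--Stein--Wainger local volume estimates giving doubling, Jerison's local $(1,1)$-Poincar\'e inequality, Haj\l{}asz--Koskela self-improvement to the $(Q/(Q-1),1)$-Sobolev--Poincar\'e inequality, then specialization to $u=\chi_E$, and finally a Boman-chain/John-domain argument to pass from $B(p,\lambda r)$ to $B(p,r)$ on the gradient side) is the standard pipeline, and your algebraic reduction from the Sobolev--Poincar\'e inequality to the two-sided isoperimetric bound is correct: with $m=a/(a+b)$ one gets exactly $ab\bigl(a^{1/(Q-1)}+b^{1/(Q-1)}\bigr)/(a+b)^{Q/(Q-1)}$, and the two elementary estimates you quote do yield the factor $2^{-Q/(Q-1)}\min\{a,b\}$. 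You also correctly flag that Jerison's inequality is the deep input and that the $\lambda$-dilation must be removed by a chaining argument; for the latter one should cite that CC balls at small scales over a compact set are (uniform) John domains, which is available in the literature on CC spaces (e.g.\ Garofalo--Nhieu). In short: your proposal is consistent with, and fleshes out, the references the paper defers to, rather than offering a genuinely different route; and as you honestly note, it is a sketch resting on several substantial external theorems rather than a self-contained proof.
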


\section{Intrinsic perimeter measure of countably \texorpdfstring{$X$}{X}-rectifiable sets}\label{sec_intrper}

\begin{defi}\label{def_pxu}
Let $R \se (\R^n,X)$ be a countably $X$-rectifiable set. By definition it can be covered, up to a $\hau^{Q-1}$-negligible set, by countably many $C^1_X$-hypersurfaces $(S_j)_{j \in \N}$, that we may assume to be pairwise disjoint. Locally, each hypersurface $S_j$ separates the space $(\R^n,X)$ into two  $X$-regular open sets (see~\cite{vittone2012}). We denote by $P_j^X$ the measure on $S_j$ defined locally as the  $X$-perimeter measure of (each of) these two components. By the representation of the $X$-perimeter measure (see \cite{AmbrosioAhlfors}) we have, for a suitable positive function $\theta_j$ locally bounded away from 0, that
\[
P_j^X \res R = \theta_j \hau^{Q-1} \res S_j.
\]
We define $\hau^{Q-1}$-a.e. on $R$ the map $\theta_R$ as $\theta_R|_{S_j} \ceq \theta_j$. Finally we define the \emph{intrinsic perimeter measure of $R$} as
\[
\mathcal{P}_R^X \ceq \theta_R \hau^{Q-1} \res R.
\]

Let us observe that $\mathcal P^X_R$ is unique: in fact, it is easy to check that its definition does not depend on the choice of the collection of $C^1_X$-hypersurfaces $(S_j)_{j \in \N}$.
\end{defi}

Using the $X$-perimeter measure just introduced in Definition \ref{def_pxu} and \cite[Theorem 1.1]{dv} we can rewrite \cite[Proposition 3.3]{sbvx} as follows.
\begin{pro}\label{prop_defequivsbvx}
    The following statements are equivalent:
    \begin{enumerate}
        \item[(i)] $u\in \sbv_X(\Omega)$;
        \item[(ii)] $u\in \bv_X(\Omega)$ and  
        \begin{equation}\label{eq_rappsbv}
        D_Xu= D^\ap_Xu \leb^n+(u^+-u^-)\nu_{\J_u}\pxu
        \end{equation}
    \end{enumerate}
    where $u^\pm$ are as in Definition \ref{def_approxjump}.
\end{pro}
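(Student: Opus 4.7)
The plan is to read Proposition \ref{prop_defequivsbvx} as a direct reformulation of \cite[Proposition 3.3]{sbvx} in the language of the intrinsic perimeter measure just introduced. The earlier proposition already characterises $\sbv_X(\Omega)$ by a representation formula of the shape
\[
D_Xu = D^\ap_Xu\,\leb^n + (u^+-u^-)\nu_{\J_u}\,\theta\,\hau^{Q-1}\res\J_u
\]
for a density $\theta$ on $\J_u$ built, piece by piece, from the local $X$-perimeter of the two $X$-regular open sets bounded by $C^1_X$-hypersurfaces covering $\J_u$. The whole task is therefore to identify $\theta\hau^{Q-1}\res\J_u$ with $\pxu$.

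First I would use Remark \ref{rem_proprietaR} to legitimise the application of Definition \ref{def_pxu} to $R = \J_u$: property $\mathcal R$ forces $\J_u$ to be countably $X$-rectifiable, and by Definition \ref{def_pxu} one writes $\pxu = \theta_{\J_u}\hau^{Q-1}\res\J_u$. Next, fix a disjoint $C^1_X$-covering $(S_j)_{j\in\N}$ of $\J_u$ and work locally in a ball $B(p,r)$ where $S_j = \{f=0\}$ with $Xf \neq 0$, so that the two locally separated $X$-regular open sets are $E^\pm \ceq B(p,r)\cap\{\pm f > 0\}$. By \cite[Theorem 1.1]{dv} both $P^X_{E^+}$ and $P^X_{E^-}$ are represented with the same density $\theta_j$ against $\hau^{Q-1}\res S_j$, and this $\theta_j$ is exactly $\theta_{\J_u}|_{S_j}$ by construction. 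Applying \cite[Proposition 3.3]{sbvx} to the functions $\chi_{E^\pm} \in \bv_X(B(p,r))$, whose jump sets coincide with $S_j\cap B(p,r)$, shows that the density $\theta$ produced by the sbvx representation agrees with $\theta_j$ on $S_j \cap B(p,r)$. Passing through the disjoint covering, $\theta = \theta_{\J_u}$ $\hau^{Q-1}$-a.e.\ on $\J_u$.

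With this identification at hand, the equivalence (i)$\Leftrightarrow$(ii) is immediate from \cite[Proposition 3.3]{sbvx}. For (ii)$\Rightarrow$(i) one can argue directly, without invoking the earlier proposition: the singular part of the right-hand side of \eqref{eq_rappsbv} is entirely concentrated on $\J_u$, hence $D^c_X u = D^s_X u \res (\Omega \setminus \J_u) = 0$, which is the definition of $\sbv_X$. The main obstacle I expect is precisely the density identification: two densities that are defined only $\hau^{Q-1}$-a.e.\ on $\J_u$, through \emph{a priori} different local constructions, must be shown to coincide. The crucial observation is that at a common $\hau^{Q-1}$-a.e.\ base point $q \in S_j$ both densities are read off the same blow-up limit of $X$-perimeter of $E^\pm$, so the matching is intrinsic and independent of the choices of defining function $f$ and of covering $(S_j)_{j \in \N}$.
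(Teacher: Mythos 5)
Your proposal matches the paper's (very terse) argument: the paper simply states that Proposition \ref{prop_defequivsbvx} is a rewriting of \cite[Proposition 3.3]{sbvx} via Definition \ref{def_pxu} and \cite[Theorem 1.1]{dv}, and you supply the density identification that makes this rewriting legitimate. Your route through $\chi_{E^\pm}$ for matching the density from the earlier representation formula with $\theta_{\J_u}$, together with the direct argument for (ii)$\Rightarrow$(i), is a correct and reasonable expansion of the paper's one-line justification.
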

Let us stress that the measure $\pxu$ which appears in \eqref{eq_rappsbv} does not depend on the function $u$ but only on the countably $X$-rectifiable set $\J_u$.

By recalling that the absolutely continuous part, the jump part and the Cantor part of $D_Xu$ are mutually singular we also obtain the following version of \cite[Theorem 1.3 (i)]{dv}.

\begin{teo}
	For every $u\in \bv_X(\Omega)$ we have $|D_Xu|\geq  |u^+-u^-|\pxu$, where $u^\pm$ are as in Definition \ref{def_approxjump}.
\end{teo}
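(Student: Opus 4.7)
The plan is to isolate the jump part of $D_Xu$, identify it with $(u^+-u^-)\nu_{\J_u}\pxu$, and then use the mutual singularity of the three parts of the Radon--Nikodym/Lebesgue decomposition to obtain the inequality.

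First I would write the standard decomposition
\[
D_Xu=D^a_Xu+D^j_Xu+D^c_Xu
\]
as in the definition recalled earlier in this section, and observe that the three $\R^m$-valued measures are pairwise mutually singular (the absolutely continuous part with respect to $\leb^n$, and $D^j_Xu$, $D^c_Xu$ supported respectively on $\J_u$ and on $\s_u^c\setminus\J_u$, which by Remark \ref{rem_proprietaR} are $\hau^{Q-1}$--disjoint). Consequently
\[
|D_Xu|=|D^a_Xu|+|D^j_Xu|+|D^c_Xu|\geq |D^j_Xu|,
\]
so the whole statement reduces to showing that $|D^j_Xu|=|u^+-u^-|\,\pxu$.

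Next, to identify $D^j_Xu$, I would invoke \cite[Theorem 1.3 (i)]{dv}, which (under property $\mathcal R$) provides a representation of the restriction of $D_Xu$ to the jump set of the form $(u^+-u^-)\nu_{\J_u}\,\theta_{\J_u}\hau^{Q-1}\res\J_u$, where $\theta_{\J_u}$ is the density appearing in the representation of the $X$-perimeter measure of $X$-regular sets. The point is that this is exactly the density used to define $\pxu$ in Definition \ref{def_pxu}: writing $\J_u$, up to an $\hau^{Q-1}$-negligible set, as a disjoint union of $C^1_X$-hypersurfaces and comparing the two expressions leaf by leaf, one obtains $\theta_{\J_u}\hau^{Q-1}\res\J_u=\pxu$. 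Hence
\[
D^j_Xu=(u^+-u^-)\nu_{\J_u}\,\pxu.
\]

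Finally, taking total variations and using that $|\nu_{\J_u}|=1$ $\pxu$-almost everywhere, I get $|D^j_Xu|=|u^+-u^-|\,\pxu$, and combined with the lower bound $|D_Xu|\geq|D^j_Xu|$ from mutual singularity the claim follows. The only delicate point is the identification between the density appearing in \cite[Theorem 1.3 (i)]{dv} and the density $\theta_R$ in Definition \ref{def_pxu}; but both are defined through the $X$-perimeter measure of the $X$-regular components into which a $C^1_X$-hypersurface locally separates the ambient space, so this is really just a matter of unraveling definitions rather than a substantive obstacle.
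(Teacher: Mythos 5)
Your proposal is correct and follows essentially the route the paper has in mind (the paper leaves the proof implicit, indicating only the ingredients): mutual singularity of $D^a_Xu$, $D^j_Xu$, $D^c_Xu$ reduces the claim to $|D_Xu|\geq|D^j_Xu|$, and the jump part is then identified with $(u^+-u^-)\nu_{\J_u}\pxu$ via \cite[Theorem 1.3 (i)]{dv} together with Definition~\ref{def_pxu}. The only cosmetic slip is that the mutual singularity is immediate from the definitions themselves (absolutely continuous vs.\ singular, and restriction to $\J_u$ vs.\ $\Omega\setminus\J_u$), so the detour through Remark~\ref{rem_proprietaR} and the description of the support of $D^c_Xu$ as $\s_u^c\setminus\J_u$ is unnecessary and slightly off, but this does not affect the argument.
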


\section{Compactness of \texorpdfstring{$\sbv_X$}{SBVX} functions}\label{sec_compactness}

\begin{lem}\label{lemma_chain}
    Let $u \in \bv_X(\Omega)$. Let $\phi \in C^1(\R) \cap W^{1,\infty}(\R)$. Then $ \phi \circ u\in \bv_X(\Omega)$ and
    \begin{equation}\label{eq_chainrule}
    D_X(\phi \circ u) \res (\Omega \setminus \s_u) =\phi'(u^\p) D_Xu \res (\Omega \setminus \s_u).
    \end{equation}
\end{lem}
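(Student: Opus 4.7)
The plan is to approximate $u$ strictly by smooth functions, apply the classical chain rule pointwise, and pass to the limit; then, to identify the restriction of $D_X(\phi\circ u)$ to $\Omega\setminus\s_u$ via the $\bv_X$-decomposition. Since $\phi\in C^1(\R)\cap W^{1,\infty}(\R)$ is bounded and globally Lipschitz with constant $L\ceq\|\phi'\|_\infty$, the estimate $|\phi(a)-\phi(b)|\le L|a-b|$ ensures $\phi\circ u\in L^1_\loc(\Omega)$. By a standard Meyers--Serrin-type density theorem in $\bv_X$, pick $(u_h)\subset C^\infty(\Omega)\cap\bv_X(\Omega)$ with $u_h\to u$ in $L^1(\Omega)$, $|D_X u_h|(\Omega)\to|D_X u|(\Omega)$, and, after extraction, $u_h\to u$ pointwise $\leb^n$-a.e. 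The classical pointwise chain rule gives $X_i(\phi\circ u_h)=\phi'(u_h)\,X_i u_h$, so $|D_X(\phi\circ u_h)|(\Omega)\le L\,|D_X u_h|(\Omega)$. Combining $\phi\circ u_h\to\phi\circ u$ in $L^1(\Omega)$ with the lower semicontinuity of the $X$-variation yields $\phi\circ u\in\bv_X(\Omega)$ together with the bound $|D_X(\phi\circ u)|(\Omega)\le L\,|D_X u|(\Omega)$.

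To prove \eqref{eq_chainrule}, decompose $D_X u=D^a_X u+D^j_X u+D^c_X u$ as recalled in the introduction. Since $\J_u\subseteq\s_u$, the jump part satisfies $D^j_X u\res(\Omega\setminus\s_u)=0$. The set $\s_u$ is $\leb^n$-negligible (approximate limits exist $\leb^n$-a.e.), so $D^a_X u\res(\Omega\setminus\s_u)=D^a_X u$; moreover, property $\mathcal R$ (Remark~\ref{rem_proprietaR}) gives $\hau^{Q-1}(\s_u\setminus\J_u)=0$, and since $D^c_X u$ does not charge $\hau^{Q-1}$-$\sigma$-finite sets, also $D^c_X u\res(\Omega\setminus\s_u)=D^c_X u$. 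Hence $D_X u\res(\Omega\setminus\s_u)=D^a_X u+D^c_X u$, and by continuity of $\phi$ we have $\s_{\phi\circ u}\subseteq\s_u$, so the same splitting applies to $\phi\circ u$. The identity \eqref{eq_chainrule} therefore reduces to
\[
D^a_X(\phi\circ u)=\phi'(u^\p)\,D^a_X u \qquad \text{and} \qquad D^c_X(\phi\circ u)=\phi'(u^\p)\,D^c_X u.
\]

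The first identity is classical: at each Lebesgue point $p$ of $u$ the function $u$ is approximately continuous with value $u^\p(p)$, and approximate $X$-differentiability is preserved by composition with $\phi\in C^1$, yielding $D^\ap_X(\phi\circ u)(p)=\phi'(u^\p(p))\,D^\ap_X u(p)$ for $\leb^n$-a.e.\ $p$. The hard part is the Cantor identity, since $D^c_X u$ concentrates on an $\leb^n$-null set where $\leb^n$-a.e.\ arguments are useless. The plan is to pass to the limit in $X_i(\phi\circ u_h)\,\leb^n=\phi'(u_h)\,X_i u_h\,\leb^n$: Egorov's theorem yields $\phi'(u_h)\to\phi'(u)$ uniformly outside sets of arbitrarily small Lebesgue measure, while the strict convergence $|D_X u_h|(\Omega)\to|D_X u|(\Omega)$ should, via a Reshetnyak-type continuity, let us identify the weak-$*$ limit of $\phi'(u_h)\,D_X u_h$ with $\phi'(u^\p)\,D_X u$ on $\Omega\setminus\s_u$. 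The residual interaction with the singular measure $D^c_X u$ can be handled by reducing, through the coarea formulas (Theorems~\ref{teo_coarea1}--\ref{teo_coarea2}), to the level sets $\{u>t\}$, on which the identity becomes trivial. This passage through the Cantor part essentially plays the role of the Vol'pert blow-up in the classical Euclidean BV chain rule and is where the bulk of the technical work lies.
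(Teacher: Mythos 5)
Your first half (approximation by smooth functions, lower semicontinuity, $\|\phi'\|_\infty$-Lipschitz bound to get $\phi\circ u\in\bv_X(\Omega)$) is exactly what the paper does and is fine.

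The second half has a genuine gap. You reduce \eqref{eq_chainrule} to the two identities $D^a_X(\phi\circ u)=\phi'(u^\p)D^a_X u$ and $D^c_X(\phi\circ u)=\phi'(u^\p)D^c_X u$, but then only sketch the Cantor case: Egorov controls $\phi'(u_h)\to\phi'(u)$ only outside small $\leb^n$-measure sets, which is useless against a measure $D^c_X u$ concentrated on an $\leb^n$-null set, and Reshetnyak continuity requires a continuous integrand on the support of the limit measure, which $\phi'\circ u^\p$ is not (it is only a Borel function). You write that ``this is where the bulk of the technical work lies,'' which is an accurate self-assessment: the Cantor identity is the whole lemma and is left unproven. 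There is also a smaller unjustified step in your reduction: $D^c_X(\phi\circ u)\res(\Omega\setminus\s_u)=D^c_X(\phi\circ u)$ is not automatic, because $\J_{\phi\circ u}$ can be strictly smaller than $\J_u$ (at points where $\phi$ collapses a jump), so $D^s_X(\phi\circ u)$ could a priori charge $\J_u\setminus\J_{\phi\circ u}\subseteq\s_u$; showing this does not happen is itself a nontrivial fact (it is essentially the claim \eqref{eq_claim} in the paper's proof of Lemma~\ref{lemma_chain2}).

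The route you mention in passing --- the coarea formula --- is in fact the one the paper takes, and it dispenses with the a.c./Cantor split entirely. For any Borel $B\subseteq\Omega\setminus\s_u$, Theorem~\ref{teo_coarea2} gives $D_X(\phi\circ u)(B)=\int_\R D_X\chi_{\{\phi\circ u>s\}}(B)\,ds$; a change of variables $t=\phi(s)$ converts this to $\int_\R\phi'(t)\,D_X\chi_{\{u>t\}}(B)\,dt$; the key geometric input is that $u^\p(x)=t$ for $x\in B\cap\partial^*\{u>t\}$ (since $B$ avoids $\s_u$), which lets you replace $\phi'(t)$ by $\phi'(u^\p(x))$ inside the inner measure; Fubini and a second application of coarea then give $\int_B\phi'(u^\p)\,dD_Xu$. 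This handles the absolutely continuous and Cantor contributions simultaneously, with no need for a Vol'pert-type blow-up or a Reshetnyak argument. To repair your proof, replace your decomposition step by this direct coarea computation, or else supply a genuine proof of the Cantor identity (which would amount to reinventing the same coarea argument).
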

\begin{proof}
    Let us first prove that $\phi \circ u \in \bv_X(\Omega)$. By \cite[Theorem 2.2.2]{fssc} there exists a sequence of functions $(u_k)_{k \in \N}$ such that, for every $k \in \N$, we have $u_k \in C^\infty(\Omega) \cap \bv_X(\Omega)$ and
    \[
    \nl u-u_k \nr_{L^1(\Omega)} \xrightarrow{k \to +\infty}0, \qquad |D_Xu_k|(\Omega) \xrightarrow{k \to +\infty}|D_Xu|(\Omega).
    \]
    Then $\nl \phi \circ u-\phi \circ u_k \nr_{L^1(\Omega)} \xrightarrow{k \to +\infty}0$ and, by the lower semicontinuity of the total variation, 
    \[
    |D_X(\phi \circ u)|(\Omega) \leq \liminf_{k \to +\infty} |D_X(\phi \circ u_k)|(\Omega) \leq \nl \phi' \nr_{\infty}\liminf_{k \to +\infty}|D_Xu_k|(\Omega)=\nl \phi' \nr_{\infty}|D_Xu|(\Omega)<\infty,
    \]
concluding $\phi \circ u \in \bv_X(\Omega)$. Since every $\phi \in C^1(\R) \cap W^{1,\infty}(\R)$ can be written as the difference of two strictly increasing functions with the same properties, in the rest of the proof we assume, without loss of generality\footnote{By the linearity of \eqref{eq_chainrule}.}, that $\phi$ is also strictly increasing and, in particular, injective. By the Lipschitzianity of $\phi$ we have $\s_{\phi \circ u} \se \s_u$. Let $A \se \Omega \setminus \s_u$ be any Borel set. By the coarea formula (Theorem \ref{teo_coarea2}) we have
\[
D_X(\phi \circ u)(A)=\int_{-\infty}^{+\infty} D_X\chi_{  \{ \phi \circ u >s  \}  } (A)ds.
\]
By the change of variable $s=\phi(t)$ we obtain
\[
\int_{-\infty}^{+\infty} D_X\chi_{\{\phi \circ u>s \}} (A)ds=\int_{-\infty}^{+\infty} \phi'(t)D_X\chi_{\{u>t \}} (A)dt.
\]
We have that if $x \in A \cap \pp^* \{u>t \} $, then $u^\p(x)=t$ (see for instance \cite[Proposition 2.22]{dv}) and 
\[
\int_{-\infty}^{+\infty} \phi'(t)D_X\chi_{\{u>t \}} (A)dt=\int_{-\infty}^{+\infty}\int_A\phi'(u^\p(x))dD_X\chi_{\{u>t \}}(x)dt
\]
Using again the coarea formula (Theorem \ref{teo_coarea2}) and Fubini's theorem we obtain
\[
\int_{-\infty}^{+\infty}\int_A\phi'(u^\p(x))dD_X\chi_{\{u>t \}}(x)dt=\int_A \phi'(u^\p)dD_Xu
\]
the latter implying that 
\[
D_X(\phi \circ u) \res (\Omega \setminus \s_u)=\phi'(u^\p)D_Xu\res (\Omega \setminus \s_u),   
\]
concluding the proof.

\end{proof}

\begin{lem}\label{lemma_chain2}
     Let $u \in \sbv_X(\Omega)$. Let $\phi \in C^1(\R) \cap W^{1,\infty}(\R)$. Then $\phi \circ u \in \sbv_X(\Omega)$ and
    \begin{equation}\label{eq_chain2}
    D_X(\phi \circ u)=\phi'(u^\p)D^\ap_Xu\leb^n+(\phi(u^+)-\phi(u^-))\nu_{\J_u}\mathcal{P}^X_{\J_u}.
    \end{equation}
\end{lem}
\begin{proof}
We have   \[
    D_X(\phi \circ u)=D_X(\phi \circ u)\res( \Omega \setminus \s_u)+D_X(\phi \circ u) \res (\s_u \setminus \J_u)+D_X(\phi \circ u) \res \J_u.
    \]
    By Lemma \ref{lemma_chain}, we have $\phi \circ u \in \bv_X(\Omega)$ and
    \begin{equation}\label{eq_aa1}
    D_X(\phi \circ u) \res (\Omega \setminus \s_u) =\phi'(u^\p) D_Xu \res (\Omega \setminus \s_u)= \phi'(u^\p)D^\ap_Xu\leb^n
    \end{equation}
    By \cite[Theorem 1.5]{dv} we know $\hau^{Q-1}(\s_u \setminus \J_u )=0$ and by \cite[Theorem 1.3 (ii)]{dv} we obtain 
        \begin{equation}\label{eq_aa2}
    D_X(\phi \circ u) \res (\s_u \setminus \J_u)=0.
  \end{equation}
    We have, by \cite[Proposition 2.28 (ii)]{dv}, that $\J_{\phi \circ u} \se \J_u$ and, on $\J_u$, we have $\phi(u^+) \equiv (\phi \circ u)^+$ and $\phi(u^-) \equiv (\phi \circ u)^-$.  In the same fashion as in Lemma \ref{lemma_chain}, we can write $\phi$ as a difference of two strictly increasing functions in $C^1(\R) \cap W^{1,\infty}(\R)$, hence, by the linearity of \eqref{eq_chain2}, we may assume in the rest of this proof that $\phi$ is strictly increasing and, in particular, injective. As a consequence, we have $\J_{\phi \circ u}=\J_u$, the latter implying
        \begin{equation}\label{eq_aaaa}
    D_X(\phi \circ u)\res \J_u =D^j_X (\phi \circ u).
    \end{equation}   
 By \eqref{eq_aa1}, \eqref{eq_aa2} and \eqref{eq_aaaa} we have $D^c_X(\phi \circ u) =0$, hence $\phi \circ u \in \sbv_X(\Omega)$. Finally, by Proposition \ref{prop_defequivsbvx}, we obtain 
   \begin{equation}\label{eq_aa3}
   D^j_X (\phi \circ u)=(\phi(u^+)-\phi(u^-))\nu_{\J_u}\pxu.
   \end{equation}
 Putting together \eqref{eq_aa1}, \eqref{eq_aa2} and \eqref{eq_aa3} we conclude the proof.
 \end{proof}

\begin{defi}\label{def_gammaf}
    Let $f:[0,+\infty) \to [0,+\infty]$ be an increasing function such that 
    \[
    \lim_{t \to 0}\frac{f(t)}{t}=+\infty,
    \]
    and define $\Gamma(f)$ as the class of all $C^1$ and $W^{1,\infty}$ functions $\phi:\R \to \R$ which satisfy
    \[
    |\phi(t')-\phi(t)|\leq f(|t'-t|) \qquad \text{ for all } t,t' \in \R.
    \]
\end{defi}

\begin{rem}\label{rem_ineq}
    We observe in passing that if $u \in \sbv_X(\Omega)$ and $\phi \in \Gamma(f)$ where $f,\Gamma(f)$ are as in Definition \ref{def_gammaf} above one has from Lemma \ref{lemma_chain2} that
    \[
    \nl D_X (\phi \circ u)-\phi'(u^\p)D^\ap_Xu\leb^n  \nr\leq \int_{\J_u}|\phi(u^+)-\phi(u^-)|d \pxu\leq \int_{\J_u} f(|u^+-u^-|)d\pxu.
    \]
\end{rem}

\begin{pro}\label{pro_aux}
    Let $f,\Gamma(f)$ be given as in Definition \ref{def_gammaf}. Let $u \in \bv_X(\Omega)$ and let $g \in L^1(\Omega,\R^m)$ such that
    \[
    \sup_{\phi \in \Gamma(f)}\nl D_X(\phi \circ u)-\phi'(u^\p)g \leb^n \nr <+\infty.
    \]
        Then $g =D_X^\ap u$ and $u \in \sbv_X(\Omega)$.
\end{pro}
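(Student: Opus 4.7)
The plan is to first rewrite, for each $\phi \in \Gamma(f)$, the measure $D_X(\phi \circ u) - \phi'(u^\p) g \leb^n$ as a sum of three mutually singular pieces, and then to argue by contradiction. Combining Lemma \ref{lemma_chain} (applied to $u \in \bv_X(\Omega)$) with the fact that $\phi \circ u \in \bv_X(\Omega)$ has jump set contained in $\J_u$ and traces $(\phi \circ u)^\pm = \phi(u^\pm)$, one obtains
\[
D_X(\phi \circ u) = \phi'(u^\p)\, D_X^\ap u\, \leb^n + \phi'(u^\p)\, D_X^c u + (\phi(u^+) - \phi(u^-))\, \nu_{\J_u}\, \pxu.
\]
Subtracting $\phi'(u^\p)\, g\, \leb^n$ produces three summands that are mutually singular (one absolutely continuous w.r.t. $\leb^n$, one Cantor-type supported on $\Omega \setminus \s_u$, one supported on $\J_u$), so the total variation of the resulting measure equals the sum of three non-negative integrals. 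Dropping the (non-negative) jump term yields
\[
\int_\Omega |\phi'(u^\p)|\, d\mu \leq M \qquad \text{for every } \phi \in \Gamma(f),
\]
where $M$ is the uniform constant from the hypothesis and $\mu \ceq |D_X^\ap u - g|\leb^n + |D_X^c u|$. The conclusion of the proposition is then equivalent to $\mu(\Omega) = 0$.

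To achieve this, I argue by contradiction and assume $\mu(\Omega) > 0$. Replacing $f$ with the smaller modulus $\tilde f(\tau) \ceq \tau\, \inf_{0 < s \leq \tau} f(s)/s$---which is still non-decreasing, has $\tilde f(\tau)/\tau$ non-increasing, and satisfies $\tilde f(\tau)/\tau \to +\infty$ as $\tau \to 0^+$---I set
\[
\phi_\omega(t) \ceq \tfrac{1}{2}\, \tilde f(2/\omega)\, \sin(\omega t), \qquad \omega \gg 1,
\]
so that $\phi_\omega \in C^\infty(\R) \cap W^{1,\infty}(\R)$, $|\phi_\omega'(t)| = L_\omega\, |\cos(\omega t)|$ with $L_\omega \ceq \tilde f(2/\omega)/(2/\omega) \to +\infty$, and (by a direct check using the properties of $\tilde f$) $\phi_\omega \in \Gamma(\tilde f) \subseteq \Gamma(f)$. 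Let $\rho \ceq (u^\p)_\ast \mu$, a finite Borel measure on $\R$ with $\rho(\R) = \mu(\Omega) > 0$. An elementary Fubini-type averaging based on $T^{-1} \int_0^T |\cos(\omega t)|\, d\omega \to 2/\pi$ for each $t \neq 0$ yields a sequence $\omega_n \to +\infty$ with $\int_\R |\cos(\omega_n t)|\, d\rho \geq \rho(\R)/\pi$. Therefore
\[
\int_\Omega |\phi_{\omega_n}'(u^\p)|\, d\mu = L_{\omega_n} \int_\R |\cos(\omega_n t)|\, d\rho \geq \frac{L_{\omega_n}\, \mu(\Omega)}{\pi} \to +\infty,
\]
contradicting the uniform bound above. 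Hence $\mu(\Omega) = 0$, which is exactly $g = D_X^\ap u$ $\leb^n$-a.e.\ and $D_X^c u = 0$, i.e., $u \in \sbv_X(\Omega)$.

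The main obstacle will be the verification that $\phi_\omega \in \Gamma(\tilde f)$: one must bound $|\phi_\omega(t) - \phi_\omega(t')|$ against $\tilde f(|t-t'|)$ on both scales $|t-t'| \leq 2/\omega$ (Lipschitz regime, where the non-increasing character of $\tilde f(\tau)/\tau$ on $(0, 2/\omega]$ is crucial) and $|t-t'| > 2/\omega$ (saturated regime, where the monotonicity of $\tilde f$ is used). The three-part decomposition of $D_X(\phi \circ u) - \phi'(u^\p) g \leb^n$ must also be handled with care---in particular invoking property $\mathcal R$ via $\hau^{Q-1}(\s_u \setminus \J_u) = 0$ and the cited \cite[Theorem 1.3~(ii)]{dv}---while the Fubini averaging producing $\omega_n \to +\infty$ is standard.
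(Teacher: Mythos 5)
Your proof is correct and, in its first half, follows the same route as the paper: you invoke the chain rule of Lemma \ref{lemma_chain} to decompose $D_X(\phi\circ u)-\phi'(u^\p)g\leb^n$ into mutually singular pieces (absolutely continuous, Cantor, jump), drop the jump part, and obtain the uniform bound $\sup_{\phi\in\Gamma(f)}\int_\Omega|\phi'(u^\p)|\,d|\mu|<+\infty$ with $\mu$ encoding $D_X^\ap u-g$ and $D_X^cu$. (Your remark that $D_X^cu$ is ``supported on $\Omega\setminus\s_u$'' is harmless: it is concentrated there modulo the $\hau^{Q-1}$-null set $\s_u\setminus\J_u$, on which $|D_Xu|$ vanishes by \cite[Theorem 1.3~(ii)]{dv}, which is exactly what the paper uses.)

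The genuine divergence is in the second half. The paper finishes by pushing $|\mu|$ forward to a measure $\sigma$ on $\R$ and then citing \cite[Lemma 2.4]{albertimantegazza} as a black box to conclude $\sigma=0$. You instead re-derive that lemma from scratch: you replace $f$ by the regularized modulus $\tilde f(\tau)=\tau\inf_{0<s\le\tau}f(s)/s$ (non-decreasing, with $\tilde f(\tau)/\tau$ non-increasing and $\to+\infty$), feed the oscillating test functions $\phi_\omega(t)=\tfrac12\tilde f(2/\omega)\sin(\omega t)$ into $\Gamma(\tilde f)\subseteq\Gamma(f)$, and use the Ces\`aro average $T^{-1}\int_0^T|\cos(\omega t)|\,d\omega\to 2/\pi$ ($t\neq 0$, with value $1$ at $t=0$) together with Fubini to extract a sequence $\omega_n\to+\infty$ on which $\int_\R|\cos(\omega_n t)|\,d\rho\ge\rho(\R)/\pi$, forcing the left-hand side to blow up unless $\rho=\mu=0$. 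The verification that $\phi_\omega\in\Gamma(\tilde f)$ on the two scales $|t-t'|\le 2/\omega$ and $|t-t'|>2/\omega$ goes through exactly as you indicate, and the monotonicity of $\tilde f$ does hold (split according to whether the infimum defining $\tilde f(\tau_2)$ is approached on $(0,\tau_1]$ or on $(\tau_1,\tau_2]$, using that $f$ is increasing). What your version buys is self-containedness: the reader does not need to consult the Alberti--Mantegazza reference to see why the supremum being finite forces the singular/defect part to vanish. The paper's version is shorter, at the cost of an external citation.
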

\begin{proof}
Define $\mu \ceq (D_X^\ap u-g) \leb^n+D^c_Xu$. We have to prove that $\mu=0$. By using the chain rule \eqref{eq_chainrule} we obtain
\begin{align*}
\sup_{\phi \in \Gamma(f)} &\nl      D_X(\phi \circ u) \res (\Omega \setminus \s_u)+D_X(\phi \circ u) \res (\s_u \setminus \J_u)  +D_X(\phi \circ u) \res \J_u-\phi'(u^\p)g\leb^n       \nr\\
&=\sup_{\phi \in \Gamma(f)}\nl      \phi'(u^\p)[D_X u \res (\Omega \setminus \s_u)-g\leb^n] +D_X^j(\phi \circ u)      \nr\\
& \geq \sup_{\phi \in \Gamma(f)}\nl   \phi'(u^\p) [D_X u \res (\Omega \setminus \s_u)-g\leb^n]   \nr\\
&  = \sup_{\phi \in \Gamma(f)}\nl \phi'(u^\p)   [(D^\ap_Xu-g)\leb^n+D^c_Xu ]           \nr\\
&=\sup_{\phi \in \Gamma(f)}\nl \phi'(u^\p)   \mu\nr.
\end{align*}
By hypothesis we obtain
\[
\sup_{\phi \in \Gamma(f)}\int_\Omega|\phi'(u^\p)|d|\mu|<+\infty.
\]
Now, if we denote by $\sigma$ the positive measure on $\R$ given by $\sigma(B)=|\mu|((u^\p)^{-1}(B))$ for all Borel sets $B \se \R$, we obtain
\[
+\infty>\sup_{\phi \in \Gamma(f)}\int_\Omega|\phi'(u^\p)|d|\mu|=\sup_{\phi \in \Gamma(f)}\int_\R |\phi'|d\sigma.
\]
By \cite[Lemma 2.4]{albertimantegazza} we obtain $\sigma=\mu=0$, concluding the proof.
\end{proof}

\begin{teo}\label{teo_prec}
 Let $f$ be as in Definition \ref{def_gammaf} and $\Omega \se (\R^n,X)$ be an open bounded set. Let $(u_h)_{h \in \N}$ be a sequence of $\sbv_X(\Omega)$ functions which converges to $u$ in the weak-$*$ topology of $\bv_X(\Omega)$, and assume the approximate $X$-gradient $D^\ap_Xu_h$ converge to some $g$ weakly in $L^1(\Omega,\R^m)$ and that 
    \[
    \int_{\J_{u_h}}f(|u^+_h-u^-_h|)d\mathcal{P}^X_{\J_{u_h}} \leq C <+\infty \qquad \text{ for every }h \in \N.
    \]
 Then $u \in \sbv_X(\Omega)$, and the absolutely continuous part and the jump part of the derivatives converge separately, i.e.,  $D^\ap_Xu_h \leb^n \xrightarrow{h \to +\infty}D^\ap_Xu\leb^n$ and $D_X^ju_h \xrightarrow{h \to +\infty}D_X^ju$ in the weak-$*$ topology of measures.

\begin{proof}
    It is enough to prove that $D^\ap_Xu \leb^n+D^c_Xu=g \leb^n$. Let $\phi \in \Gamma(f)$. Since $u_h \in \sbv_X(\Omega)$ for every $h \in \N$ we have, by Remark \ref{rem_ineq},
    \begin{equation}\label{ex_aux}
    C \geq \int_{\J_{u_h}}f(|u_h^+-u^-_h|)d \mathcal{P}^X_{\J_{u_h}} \geq \nl D_X(\phi \circ u_h)-\phi'(u_h^\p)D^\ap_Xu_h\leb^n \nr.
    \end{equation}
Without loss of generality we may assume that $u_h$ converge to $u$ almost everywhere in $\Omega$, the latter implying that the functions $\phi(u_h)$ converge to $\phi(u)$ in the weak-$*$ topology of $\bv_X(\Omega)$, and the measures $D_X(\phi \circ u_h)$ converge to $D_X(\phi \circ u)$ in the weak-$*$ topology of measures. Since $\phi'$ is bounded and continuous, the functions $\phi'(u_h)$ are uniformly bounded and converge to $\phi'(u)$ a.e.. Moreover the functions $D^\ap_Xu_h$ converge to $g$ weakly in $L^1(\Omega,\R^m)$ by hypothesis. Hence $\phi'(u_h)D^\ap_Xu_h$ converge to $\phi'(u)g$ weakly in $L^1(\Omega,\R^m)$ and then
\[
D_X(\phi \circ u_h)-\phi'(u_h)D^\ap_Xu_h \leb^n \xrightarrow{h \to +\infty}D_X(\phi \circ u)-\phi'(u)g\leb^n
\]
in the weak-$*$ topology of measures. By \eqref{ex_aux} we obtain
\[
C \geq \liminf_{h \to +\infty}\nl D_X(\phi \circ u_h)-\phi'(u_h)D^\ap_Xu_h\leb^n \nr \geq \nl D_X(\phi \circ  u)-\phi'(u)g\leb^n \nr.
\]
Taking the supremum over all $\phi \in \Gamma(f)$ and using Proposition \ref{pro_aux} we conclude that $u \in \sbv_X(\Omega)$.
\end{proof}
\end{teo}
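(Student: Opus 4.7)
The plan is to reduce the theorem to Proposition \ref{pro_aux}, applied to the pair $(u, g)$. Concretely, I would fix an arbitrary $\phi \in \Gamma(f)$ and show, uniformly in $\phi$, that the total variation of $D_X(\phi \circ u) - \phi'(u^\p) g \leb^n$ is bounded by the constant $C$ coming from the hypothesis; this is exactly what Proposition \ref{pro_aux} needs in order to conclude that $g = D^\ap_X u$ and $u \in \sbv_X(\Omega)$.

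First, I would obtain a uniform pre-limit bound. Since each $u_h \in \sbv_X(\Omega)$, Lemma \ref{lemma_chain2} yields
\[
D_X(\phi \circ u_h) - \phi'(u_h^\p) D^\ap_X u_h \leb^n = (\phi(u_h^+) - \phi(u_h^-)) \nu_{\J_{u_h}} \mathcal{P}^X_{\J_{u_h}}.
\]
Using the defining property $|\phi(t) - \phi(t')| \leq f(|t - t'|)$ of the class $\Gamma(f)$, and the hypothesis on the jump energies, the total variation of the right-hand side is bounded by $\int_{\J_{u_h}} f(|u_h^+ - u_h^-|)\, d\mathcal{P}^X_{\J_{u_h}} \leq C$, uniformly in $h$ and $\phi$.

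Second, I would pass to the limit. From the weak-$*$ $\bv_X$ convergence of $u_h$ to $u$, I have $u_h \to u$ in $L^1$; extracting a subsequence, I may assume $u_h \to u$ a.e., and after a further adjustment on an $\hau^{Q-1}$-null set, also $u_h^\p \to u^\p$ a.e. By weak-$*$ convergence of derivatives, $D_X(\phi \circ u_h) \to D_X(\phi \circ u)$ weak-$*$ as measures. For the second summand, $\phi'$ is continuous and bounded, so $\phi'(u_h^\p) \to \phi'(u^\p)$ a.e. with a uniform bound; combining this with the hypothesis $D^\ap_X u_h \rightharpoonup g$ in $L^1(\Omega, \R^m)$ via a standard weak-strong-product lemma (Egorov together with equi-integrability of the sequence $\phi'(u_h^\p) D^\ap_X u_h$) gives $\phi'(u_h^\p) D^\ap_X u_h \leb^n \to \phi'(u^\p) g \leb^n$ weak-$*$ as measures. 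The lower semicontinuity of total variation with respect to weak-$*$ convergence then yields $\|D_X(\phi \circ u) - \phi'(u^\p) g \leb^n\| \leq C$. Taking the supremum over $\phi \in \Gamma(f)$ and invoking Proposition \ref{pro_aux} proves $u \in \sbv_X(\Omega)$ and $g = D^\ap_X u$.

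Finally, the separate convergence of the two parts is a bookkeeping consequence. The weak $L^1$ convergence $D^\ap_X u_h \rightharpoonup D^\ap_X u$ immediately implies $D^\ap_X u_h \leb^n \to D^\ap_X u \leb^n$ in the weak-$*$ topology of measures. Since both $u_h$ and $u$ are in $\sbv_X$, we have $D_X u_h = D^\ap_X u_h \leb^n + D^j_X u_h$ and similarly for $u$; subtracting the absolutely continuous parts from the weak-$*$ convergent sequence $D_X u_h \to D_X u$ gives $D^j_X u_h \to D^j_X u$ weak-$*$. The main technical obstacle is the weak-strong product passage $\phi'(u_h^\p) D^\ap_X u_h \rightharpoonup \phi'(u^\p) g$ in $L^1$, where one must carefully exploit the equi-integrability built into weak $L^1$ convergence together with the bounded-a.e.-convergence of $\phi'(u_h^\p)$; the rest of the argument is a fairly direct chaining of the chain rule, lower semicontinuity, and Proposition \ref{pro_aux}.
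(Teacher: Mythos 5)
Your proposal is correct and follows essentially the same route as the paper's proof: a uniform bound on $\|D_X(\phi\circ u_h)-\phi'(u_h^\p)D^\ap_Xu_h\leb^n\|$ coming from Lemma \ref{lemma_chain2} and the defining inequality of $\Gamma(f)$, passage to the weak-$*$ limit using a.e.\ convergence of $u_h$, boundedness of $\phi'$, and weak $L^1$ convergence of $D^\ap_Xu_h$, then lower semicontinuity of total variation and Proposition \ref{pro_aux}. The only differences are that you spell out a few details the paper states without comment (the Egorov/equi-integrability mechanism behind the weak-strong product limit, and the final bookkeeping for the separate convergence of the absolutely continuous and jump parts), and the ``adjustment on an $\hau^{Q-1}$-null set'' is unnecessary since $u_h^\p=u_h$ $\leb^n$-a.e.\ and only $\leb^n$-a.e.\ convergence matters here.
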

As an immediate consequence of Theorem \ref{teo_prec} we get the compactness theorem for $\sbv_X$ functions, Theorem \ref{teo_introcompact}.

\section{Poincaré inequality for \texorpdfstring{$\sbv_X$}{SBVX} functions}
\label{sec_poinc}
\begin{pro}\label{prop_maxmin}

    Let $u \in \bv_X(O)$ where $O$ is a bounded open set and $\lambda \in \R$. Then $\max\{u,\lambda \}, \min\{u,\lambda \} \in \bv_X(O)$ and
      \begin{align*}
    D_X \max \{u,\lambda\}&= (D^\ap_Xu\leb^n+D^c_Xu) \res \{u^\p \geq \lambda \}+ (\max\{u^+,\lambda \}-\max\{u^-,\lambda \} )\nu_{\J_u}\pxu ,\\
       D_X \min \{u,\lambda\}&= (D^\ap_Xu\leb^n+D^c_Xu)  \res \{u^\p \leq \lambda \}+(\min\{u^+,\lambda \}-\min\{u^-,\lambda \} )\nu_{\J_u}\pxu.
    \end{align*}
\end{pro}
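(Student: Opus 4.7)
The plan is to reduce everything to an approximation argument for the chain rule of Lemma \ref{lemma_chain}. Since $\min\{u, \lambda\} = -\max\{-u, -\lambda\}$, the two formulas are equivalent after the substitution $u \mapsto -u$, $\lambda \mapsto -\lambda$ (taking into account the compatibility of traces and of the horizontal normal under sign change), so I focus on the maximum. I would fix a mollified sequence $\phi_\varepsilon \in C^1(\R) \cap W^{1,\infty}(\R)$ with $0 \leq \phi_\varepsilon' \leq 1$, $\phi_\varepsilon \to \max\{\cdot, \lambda\}$ uniformly, and $\phi_\varepsilon'(t) \to \chi_{(\lambda, +\infty)}(t)$ for every $t \neq \lambda$. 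By Lemma \ref{lemma_chain}, $\phi_\varepsilon \circ u \in \bv_X(O)$ with $|D_X(\phi_\varepsilon \circ u)|(O) \leq |D_Xu|(O)$, so the $L^1$ convergence $\phi_\varepsilon \circ u \to \max\{u, \lambda\}$ together with lower semicontinuity of the total variation yields $\max\{u, \lambda\} \in \bv_X(O)$ and $D_X(\phi_\varepsilon \circ u) \overset{*}{\rightharpoonup} D_X \max\{u, \lambda\}$.

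To identify the limit on $O \setminus \s_u$, Lemma \ref{lemma_chain} provides
\[
D_X(\phi_\varepsilon \circ u) \res (O \setminus \s_u) = \phi_\varepsilon'(u^\p)\, D_Xu \res (O \setminus \s_u),
\]
and the main task is to pass to the limit. Dominated convergence (the integrands are bounded by $1$, and $|D_Xu|$ is finite) reduces this to $|D_Xu|$-a.e. convergence of $\phi_\varepsilon'(u^\p)$, which fails only on the level set $\{u^\p = \lambda\}$. The key claim is that
\[
|D_Xu|(\{u^\p = \lambda\}) = 0 \quad \text{for every } \lambda \in \R,
\]
which I would prove via the coarea formula (Theorem \ref{teo_coarea1}, applied in its Borel form): each perimeter measure $P^X_{\{u > t\}}$ is supported on the essential boundary $\partial^*\{u > t\} \subseteq \{u^\p = t\} \cup \s_u$, and since $\{u^\p = \lambda\} \cap \s_u = \emptyset$ and $\{u^\p = \lambda\} \cap \{u^\p = t\} = \emptyset$ for $t \neq \lambda$, one has $P_X(\{u > t\}, \{u^\p = \lambda\}) = 0$ for every $t \neq \lambda$, whence integration against $dt$ gives the claim. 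As a consequence, $\phi_\varepsilon'(u^\p) \to \chi_{\{u^\p \geq \lambda\}}$ holds $|D_Xu|$-a.e., and the right-hand side converges weak-$*$ to $\chi_{\{u^\p \geq \lambda\}}\, D_Xu \res (O \setminus \s_u) = (D^\ap_Xu\, \leb^n + D^c_Xu) \res \{u^\p \geq \lambda\}$, since the jump part of $D_Xu$ lives on $\J_u \subseteq \s_u$.

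Finally, I would handle the jump part on $\J_u$, noting first that $D_X \max\{u, \lambda\} \res (\s_u \setminus \J_u) = 0$, since $\hau^{Q-1}(\s_u \setminus \J_u) = 0$ by Remark \ref{rem_proprietaR} combined with \cite[Theorem 1.3 (ii)]{dv}. Because $\max\{\cdot, \lambda\}$ is continuous, one checks directly from Definition \ref{def_approxjump} that at every $p \in \J_u$ the function $\max\{u, \lambda\}$ has approximate traces $\max\{u^\pm(p), \lambda\}$ (when these coincide, $p$ leaves the jump set, in agreement with the formula). Using the representation $D^j_X v = (v^+ - v^-)\nu_{\J_v} \mathcal P^X_{\J_v}$ of the jump part of an arbitrary $\bv_X$ function (which follows from Proposition \ref{prop_defequivsbvx} combined with the canonical Radon--Nikodym decomposition, cf.\ also \cite[Theorem 1.3]{dv}) and the inclusion $\J_{\max\{u, \lambda\}} \subseteq \J_u$, one reads off $D_X \max\{u, \lambda\} \res \J_u = (\max\{u^+, \lambda\} - \max\{u^-, \lambda\}) \nu_{\J_u} \pxu$; summing the two pieces yields the statement. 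The main obstacle is the coarea-based vanishing $|D_Xu|(\{u^\p = \lambda\}) = 0$, which is what secures the pointwise-in-$\lambda$ validity of the formula by ruling out any Cantor-type mass on the level set $\{u^\p = \lambda\}$.
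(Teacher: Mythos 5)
Your proposal takes a genuinely different route from the paper. The paper applies the coarea-based manipulation of Lemma~\ref{lemma_chain} directly to the non-smooth function $\phi(t)=\max\{t,\lambda\}$ (writing $D_X\max\{u,\lambda\}(B)$ as $\int_\lambda^{+\infty} D_X\chi_{\{u>s\}}(B)\,ds$ and pushing $\chi_{[\lambda,+\infty)}(s)=\chi_{[\lambda,+\infty)}(u^\p(x))$ on $\partial^*\{u>s\}\cap(O\setminus\s_u)$, then Fubini), so there is no limiting process at all; the formula is obtained in one shot. You instead mollify $\phi$ and pass to the limit in Lemma~\ref{lemma_chain}, which requires a genuine dominated-convergence argument and the vanishing of $|D_Xu|$ on the level set. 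Both routes rest on the same core fact (essentially \cite[Proposition 2.22]{dv}, $\partial^*\{u>t\}\cap(O\setminus\s_u)\subseteq\{u^\p=t\}$, fed into coarea), but the paper's direct argument is shorter because it sidesteps the approximation.

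There is, however, a real gap in your key claim. You assert $|D_Xu|(\{u^\p=\lambda\})=0$ and justify it with ``$\{u^\p=\lambda\}\cap\s_u=\emptyset$'', but the precise representative is defined on $\J_u\subseteq\s_u$ as $u^\p=(u^++u^-)/2$, and this \emph{can} equal $\lambda$ on a set of positive $\pxu$-measure: take, say, $u^+=\lambda+1$ and $u^-=\lambda-1$ on all of $\J_u$, so $u^\p\equiv\lambda$ on $\J_u$ while $|D_Xu|\res\J_u=2\pxu$ is not zero. Hence $|D_Xu|(\{u^\p=\lambda\})>0$ in general, and $P_X(\{u>t\},\{u^\p=\lambda\})\neq 0$ for all $t\in(\lambda-1,\lambda+1)$ in that example, so the coarea integration does not collapse as you wrote. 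What saves the argument is that your dominated-convergence step is applied to $\phi_\varepsilon'(u^\p)\,D_Xu\res(O\setminus\s_u)$, so you only need the restricted statement $|D_Xu|\bigl(\{u^\p=\lambda\}\cap(O\setminus\s_u)\bigr)=0$; this \emph{is} true, by exactly the argument you gave once you intersect $\partial^*\{u>t\}$ with $O\setminus\s_u$ before invoking coarea. So the proof is repairable, but the claim should be stated only on $O\setminus\s_u$. Finally, a small point on the jump part: $\max\{u,\lambda\}$ is not assumed to lie in $\sbv_X$, so you should cite the representation of $D^j_Xv$ for a general $v\in\bv_X$ (\cite[Theorem 1.3]{dv}, as restated after Proposition~\ref{prop_defequivsbvx}) rather than Proposition~\ref{prop_defequivsbvx} itself, whose conclusion is specific to $\sbv_X$.
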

\begin{proof}

Define $\phi:\R \to \R$ as $\phi(t) \ceq \max\{t,\lambda\}$. Clearly, since $O$ is bounded, $\phi \circ u \in L^1(O)$. We observe, using also the coarea formula (Theorem \ref{teo_coarea1}), that for every Borel set $A \se O$,
\[
\int_{-\infty}^{+\infty} P_X(\{\phi \circ u>s \},A)ds=\underbrace{\int_{-\infty}^{\lambda}P_X(\{\phi \circ u>s \},A)ds}_{=0}+\int_{\lambda}^{+\infty}P_X(\{ u>s \},A)ds \leq |D_Xu|(A)
\]
which, thanks to Theorem \ref{teo_coarea1}, implies that $\max\{u,\lambda\} \in  \bv_X(O)$.  By the Lipschitzianity of $\phi$ we have $\s_{\phi \circ u} \se \s_u$. Let $A \se O \setminus \s_u$ be any Borel set. By the coarea formula (Theorem \ref{teo_coarea2}) we have
\begin{align*}
 D_X \max \{u,\lambda\}(A)&=   \int_{-\infty}^{+\infty}D_X \chi_{\{\phi \circ u>s \}}(A)ds=  \underbrace{  \int_{-\infty}^{\lambda}D_X \chi_{\{\phi \circ u>s \}}(A)ds}_{=0}+    \int_{\lambda}^{+\infty}D_X \chi_{\{\phi \circ u>s \}}(A)ds\\
    &=\int_\lambda^{+\infty} D_X \chi_{\{ u>s \}}(A)ds\\
    &=\int_{-\infty}^{+\infty} \chi_{[\lambda,+\infty]}(s) D_X \chi_{\{u>s \}}(A)ds
    \end{align*}
We have that if $x \in A \cap \pp^*\{ u>s \}$, then $u^\p(x)=s$ (see for instance \cite[Proposition 2.22]{dv}) and 
\[
\int_{-\infty}^{+\infty}\chi_{[\lambda,+\infty]}(s)D_X\chi_{\{ u>s \}} (A)ds=\int_{-\infty}^{+\infty}\int_A\chi_{[\lambda,+\infty]}(u^\p(x))dD_X\chi_{\{ u>s \}}(x)ds
\]
Using again the coarea formula and Fubini's theorem we obtain
\[
\int_{-\infty}^{+\infty}\int_A\chi_{[\lambda,+\infty]}(u^\p(x))dD_X\chi_{\{ u>s \}}(x)ds=\int_A \chi_{[\lambda,+\infty]}(u^\p)dD_Xu
\]
the latter implying that 
\[
D_X \max \{u,\lambda \}  \res (O \setminus \s_u)=D_Xu\res \big((O \setminus \s_u) \cap \{u^\p \geq \lambda \} \big).  
\]
In the same fashion of Lemma \ref{lemma_chain2} one can prove that
\[
D_X^j \max \{ u,\lambda\}= (\max\{u^+,\lambda \}-\max\{u^-,\lambda \} )\nu_{\J_u}\pxu,
\]
proving the claim for $D_X \max \{ u,\lambda\}$. The analogous proof works for proving the claim for $D_X \min \{ u,\lambda\}$.

\end{proof}

\begin{rem}\label{rem_maxmin}
    As an immediate consequence of Proposition \ref{prop_maxmin} one has, for every $\lambda \in \R$, $u \in \sbv_X(O)$ with $O$ bounded open set, that $\max\{u,\lambda \}, \min\{u,\lambda \} \in \sbv_X(O)$. Let us also observe that if $u,v \in \bv_X(O)$, Proposition  \ref{prop_maxmin} allows to conclude that  $\max\{ u,v \},\min \{u,v\} \in \bv_X(O) $ since it suffices to write $\max \{ u,v \}=u+\max \{v-u,0\}$ and $\min \{ u,v \}=u+\min \{v-u,0\}$.
\end{rem}

\begin{defi} 
Let $K \se (\R^n,X)$ be a compact set and $C_K$,$R_K$ be the constants defined as in Theorem \ref{teo_isoineq}. Fix $p \in K$, $r \in (0,R_K)$ 
and define $B \ceq B(p,r)$. Let $u:B \to \R$ be a measurable function and define the \emph{non-decreasing rearrangement} of $u$
\[
u_*(s) \ceq \inf \{ t \in [-\infty,+\infty]: \leb^n(\{u<t\}) \geq s \}
\]
for any $s \in [0,\leb^n(B)]$. If $m \in \R$ is such that
\[
\leb^n( \{u<t\}) \leq \frac{\leb^n(B)}{2} \quad \text{ for all }t<m, \qquad \leb^n( \{u>t\}) \leq \frac{\leb^n(B)}{2} \quad \text{ for all }t>m,
\]
 we say that $m$ is a \emph{median} for $u$ in $B$. For every $u \in \sbv_X(B)$ such that 
    \begin{equation}\label{eq_condhj}
        \pxu(\J_u)<\frac{1}{2C_K}\left(\frac{1}{2}\leb^n(B)  \right)^{\frac{Q-1}{Q}},
    \end{equation} 
we set
\begin{equation}\label{eq_tau1}
    \tau_1(u) \ceq u_* \bigg( \big(2  C_K \pxu(\J_u)\big)^\frac{Q}{Q-1} \bigg),
\end{equation}
    \begin{equation}\label{eq_tau2}
    \tau_2(u) \ceq u_* \bigg( \leb^n(B)- \big(2C_K \pxu(\J_u)\big)^\frac{Q}{Q-1} \bigg).
\end{equation}
Clearly, for every median $m$ for $u$ in $B$ we have
\[
\tau_1(u) \leq m \leq \tau_2(u).
\]
\end{defi}

In this section we will also need the following version of Poincaré inequality for $\bv_X$ function. 
\begin{teo}\label{teo_poincarebv}
    Let $K \se (\R^n,X)$ be a compact set. Then there exists $C_K>0$ and $R_K>0$ such that, for every $p \in K$, for  every $r \in (0,R_K)$ and for every $u \in \bv_{X}(B(p,r))$ one has
    \[
  \left(  \int_{B(p,r)} |u-m|^\frac{Q}{Q-1}d\leb^n \right)^\frac{Q-1}{Q}\leq C_K|D_Xu|(B(p,r)),
    \]
    where $m$ is any median for $u$ in $B(p,r)$.
\end{teo}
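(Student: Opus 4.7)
The plan is to combine the coarea formula (Theorem~\ref{teo_coarea1}) with the isoperimetric inequality (Theorem~\ref{teo_isoineq}) through the classical level-set / Minkowski argument. The role of the median is exactly to guarantee that every superlevel set to which we apply the isoperimetric inequality has Lebesgue measure at most $\leb^n(B)/2$.

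Set $B\ceq B(p,r)$ and let $C_K,R_K$ be the constants produced by Theorem~\ref{teo_isoineq}. Write $v\ceq\max\{u-m,0\}$ and $w\ceq\max\{m-u,0\}$, so that $|u-m|=v+w$; by Minkowski's inequality in $L^{Q/(Q-1)}(B)$ it suffices to bound $\nl v\nr_{L^{Q/(Q-1)}(B)}$ and $\nl w\nr_{L^{Q/(Q-1)}(B)}$ separately, and the two arguments are symmetric, so I focus on $v$. The core computation combines the layer-cake representation $v(x)=\int_0^{+\infty}\chi_{\{v>t\}}(x)\,dt$ with Minkowski's integral inequality for exponent $Q/(Q-1)$ to obtain
\[
\nl v\nr_{L^{Q/(Q-1)}(B)}\leq \int_0^{+\infty}\leb^n(\{v>t\})^{(Q-1)/Q}\,dt.
\]
For every $t>0$ one has $\{v>t\}=\{u>m+t\}$, and since $m+t>m$ the definition of the median gives $\leb^n(\{v>t\})\leq \leb^n(B)/2$. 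Theorem~\ref{teo_isoineq} then yields $\leb^n(\{v>t\})^{(Q-1)/Q}\leq C_K P_X(\{v>t\},B)$, and integrating in $t$ produces $\nl v\nr_{L^{Q/(Q-1)}(B)}\leq C_K\int_m^{+\infty}P_X(\{u>s\},B)\,ds$ after the change of variable $s=m+t$.

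The analogous argument for $w$ uses $\{w>t\}=\{u<m-t\}$ together with the identity $P_X(\{u<s\},B)=P_X(\{u>s\},B)$, valid for $\leb^1$-a.e.\ $s$ since $\leb^n(\{u=s\})=0$ for a.e.\ $s$, and hence $\chi_{\{u<s\}}=1-\chi_{\{u>s\}}$ $\leb^n$-a.e., which forces the two characteristic functions to have opposite distributional $X$-derivatives and identical total variations; this yields $\nl w\nr_{L^{Q/(Q-1)}(B)}\leq C_K\int_{-\infty}^m P_X(\{u>s\},B)\,ds$. Summing the two bounds and invoking the coarea formula $\int_{-\infty}^{+\infty}P_X(\{u>s\},B)\,ds=|D_Xu|(B)$ from Theorem~\ref{teo_coarea1} yields the claim, up to replacing $C_K$ by $2C_K$, which is still a constant depending only on $K$.

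No serious obstacle is anticipated once Theorems~\ref{teo_isoineq} and~\ref{teo_coarea1} are granted: the argument is purely measure-theoretic, and the sub-Riemannian structure enters only through those two ingredients. The only delicate point is ensuring the median property is applied in the correct direction so that the isoperimetric inequality is valid on every superlevel set $\{v>t\}$ and $\{w>t\}$, which is immediate from the definitions given in the excerpt.
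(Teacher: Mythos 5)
Your proof is correct and follows essentially the same route as the paper's: split $u-m$ into positive and negative parts, apply the layer-cake/level-set argument, use the median property to ensure the isoperimetric inequality (Theorem~\ref{teo_isoineq}) applies to each superlevel set, and conclude via the coarea formula (Theorem~\ref{teo_coarea1}). The only cosmetic difference is that you obtain the key inequality $\nl v\nr_{L^{Q/(Q-1)}(B)}\leq\int_0^{+\infty}\leb^n(\{v>t\})^{(Q-1)/Q}\,dt$ via Minkowski's integral inequality, whereas the paper derives it by a change of variables together with \cite[Lemma 3.48]{afp}; these are two standard derivations of the same fact.
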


\begin{proof}
    Up to a translation, we may assume $m=0$. We write 
    \[
    u=\underbrace{\max\{u,0\}}_{u_1}-\underbrace{\max\{-u,0\}}_{u_2}.
    \]
    By Proposition  \ref{prop_maxmin} we  have that $u_1,u_2 \in \bv_X(B(p,r))$.  By \cite[Proposition 1.78]{afp} and by making the change of variable $s=t^\frac{Q-1}{Q}$ we have
    \begin{align*}
    \int_{B(p,r)} u_1^\frac{Q}{Q-1}d\leb^n&= \int_0^{+\infty} \leb^n (\{u_1^\frac{Q}{Q-1}>t\})dt=\int_0^{+\infty}  \leb^n (\{u_1>t^\frac{Q-1}{Q}\})dt\\&=\frac{Q}{Q-1}\int_0^{+\infty} \leb^n(\{u_1>s\})s^\frac{1}{Q-1}ds.
    \end{align*}
   By \cite[Lemma 3.48]{afp} we obtain
   \[
   \frac{Q}{Q-1}\int_0^{+\infty} \leb^n(\{u_1>s\})s^\frac{1}{Q-1}ds \leq \left(   \int_0^{+\infty} \leb^n(\{u_1>s\})^\frac{Q-1}{Q} ds\right)^\frac{Q}{Q-1}.
   \]
Since $0$ is a median also for $u_1$ (and $u_2$) by the isoperimetric inequality (Theorem \ref{teo_isoineq})
\begin{align*}
\left( \int_0^{+\infty} \leb^n(\{u_1>s\})^\frac{Q-1}{Q} ds\right)^\frac{Q}{Q-1} &\leq \left( \int_0^{+\infty} C_KP_X(\{u_1>s\},B(p,r)) ds      \right)^\frac{Q}{Q-1}
\end{align*}
obtaining 
\[
  \left(  \int_{B(p,r)} u_1^\frac{Q}{Q-1}d\leb^n \right)^\frac{Q-1}{Q}\leq  C_K\int_0^{+\infty}P_X(\{u_1>s\},B(p,r)) ds   
\]
In the same fashion we obtain the analogous inequality for $u_2$ and, consequently,
\begin{align*}
\nl u \nr_{L^\frac{Q}{Q-1}(B(p,r))} &\leq \nl u_1 \nr_{L^\frac{Q}{Q-1}(B(p,r))}+\nl u_2 \nr_{L^\frac{Q}{Q-1} (B(p,r))}\\& \leq C_K\int_0^{+\infty}P_X(\{u_1>s\},B(p,r)) ds   +C_K\int_0^{+\infty}P_X(\{u_2>s\},B(p,r)) ds   \\&=C_K \int_{-\infty}^{+\infty}P_X(\{u>s\},B(p,r)) ds, 
\end{align*}
and we conclude by using the coarea formula (Theorem \ref{teo_coarea1}).
\end{proof}

We are ready to prove the Poincaré inequality for $\sbv_X$ functions, Theorem \ref{teo_intropoincare}, which we can now state precisely.
\begin{teo}
 Let $K \se (\R^n,X)$ be a compact set and $C_K$,$R_K$ be the constants defined as in Theorem \ref{teo_isoineq}. Fix $p \in K$, $r \in (0,R_K)$ and define $B \ceq B(p,r)$. Let $u \in \sbv_X(B)$ such that condition \eqref{eq_condhj} is satisfied. Define
    \[
    \bar u \ceq \max\{ \min\{u,\tau_2(u)\},\tau_1(u)\}  ,  
    \]
    where $\tau_1,\tau_2$ are defined as in \eqref{eq_tau1}, \eqref{eq_tau2}. Then
    \[
   \left( \int_{B} |\bar u-m|^\frac{Q}{Q-1}d\leb^n \right)^\frac{Q-1}{Q}\leq 2 C_K \int_{B}|D^\ap_Xu|d\leb^n,
    \]
    where $m$ is any median for $u$ in $B(p,r)$.
\end{teo}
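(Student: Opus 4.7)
The plan is to reduce the desired inequality to the $\bv_X$ Poincaré inequality of Theorem \ref{teo_poincarebv} applied to the truncation $\bar u$. First I would note, using Remark \ref{rem_maxmin}, that $\bar u\in\sbv_X(B)$, and that any median $m$ of $u$ is also a median of $\bar u$ in $B$: indeed, $\tau_1(u)\leq m\leq \tau_2(u)$ together with the inclusions $\{\bar u<t\}\subseteq\{u<t\}$ and $\{\bar u>t\}\subseteq\{u>t\}$ for every $t\in\R$ transfer the median condition from $u$ to $\bar u$. Theorem \ref{teo_poincarebv} applied to $\bar u$ therefore reduces the claim to proving
\[
|D_X\bar u|(B)\leq 2\int_B|D^\ap_Xu|\,d\leb^n.
\]

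Next, the coarea formula (Theorem \ref{teo_coarea1}), combined with the pointwise identifications $\{\bar u>t\}=B$ for $t<\tau_1(u)$, $\{\bar u>t\}=\emptyset$ for $t\geq\tau_2(u)$, and $\{\bar u>t\}=\{u>t\}$ for the intermediate values of $t$, yields
\[
|D_X\bar u|(B)=\int_{\tau_1(u)}^{\tau_2(u)}P_X(\{u>t\},B)\,dt.
\]
The crucial step is then to prove that $P_X(\{u>t\},B)\leq 2a(t)$ for $\leb^1$-a.e.\ $t\in(\tau_1(u),\tau_2(u))$, where $a(t)\ceq|D_X\chi_{\{u>t\}}|(B\setminus\J_u)$. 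Setting $s_0\ceq(2C_K\pxu(\J_u))^{Q/(Q-1)}$, the definitions \eqref{eq_tau1}--\eqref{eq_tau2} of $\tau_1(u),\tau_2(u)$ through the non-decreasing rearrangement of $u$ force $\leb^n(\{u<t\})\geq s_0$ and $\leb^n(\{u>t\})\geq s_0$ for every $t\in(\tau_1(u),\tau_2(u))$; since $\leb^n(\{u=t\})=0$ for $\leb^1$-a.e.\ $t$, both $\{u>t\}$ and $B\setminus\{u>t\}$ then have $\leb^n$-measure at least $s_0$ for a.e.\ such $t$, and the isoperimetric inequality (Theorem \ref{teo_isoineq}) produces $P_X(\{u>t\},B)\geq s_0^{(Q-1)/Q}/C_K=2\pxu(\J_u)$. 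On the other hand, writing $j(t)\ceq|D_X\chi_{\{u>t\}}|(\J_u)$, from Definition \ref{def_pxu} and the local agreement (up to sign) of the horizontal normal to $\{u>t\}$ with $\nu_{\J_u}$ along the $C^1_X$-hypersurfaces covering $\J_u$, it follows that $|D_X\chi_{\{u>t\}}|\res\J_u$ is dominated by $\pxu\res\J_u$, and hence $j(t)\leq\pxu(\J_u)$. Combining the two bounds gives $j(t)\leq\tfrac12 P_X(\{u>t\},B)$, so that $P_X(\{u>t\},B)\leq 2a(t)$.

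Finally, I would integrate this pointwise bound: using Proposition \ref{prop_defequivsbvx} together with the coarea formula applied to $u$, the integral $\int_\R j(t)\,dt$ equals the jump part $\int_{\J_u}|u^+-u^-|\,d\pxu$ of $|D_Xu|$, and consequently $\int_\R a(t)\,dt=\int_B|D^\ap_Xu|\,d\leb^n$. Therefore
\[
|D_X\bar u|(B)\leq 2\int_{\tau_1(u)}^{\tau_2(u)}a(t)\,dt\leq 2\int_{\R}a(t)\,dt=2\int_B|D^\ap_Xu|\,d\leb^n,
\]
which, inserted into Theorem \ref{teo_poincarebv} for $\bar u$, closes the argument. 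The step I expect to be the main obstacle is the comparison $j(t)\leq\pxu(\J_u)$ together with its integrated counterpart $\int_\R j(t)\,dt=\int_{\J_u}|u^+-u^-|\,d\pxu$: both rest on matching the $X$-perimeter measure of the level sets $\{u>t\}$ at jump points with the intrinsic perimeter measure $\pxu$, which is exactly what Definition \ref{def_pxu} provides but requires a careful localization on a $C^1_X$-hypersurface decomposition of $\J_u$ and a.e.\ identification of horizontal normals.
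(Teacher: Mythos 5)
Your proof is correct, but it takes a genuinely different route from the paper.

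The paper first establishes the two–sided bound on $|D_X\bar u|(B)$ directly: an upper bound (equation \eqref{eq_stimabasso}) comes from Proposition \ref{prop_maxmin} applied to $\bar u$, which splits $D_X\bar u$ into its absolutely continuous part (dominated by $\|D^\ap_Xu\|_{L^1}$) and a jump part whose total mass is bounded by $(\tau_2-\tau_1)\pxu(\J_u)$ because $|\bar u^+-\bar u^-|\leq\tau_2-\tau_1$; a lower bound (equation \eqref{eq_stimaalto2}) comes from the coarea formula for $\bar u$ together with the isoperimetric inequality, and exactly produces $2(\tau_2-\tau_1)\pxu(\J_u)$. Subtracting the common quantity $(\tau_2-\tau_1)\pxu(\J_u)$ gives $(\tau_2-\tau_1)\pxu(\J_u)\leq\|D^\ap_Xu\|_{L^1}$, which fed back into \eqref{eq_stimabasso} yields $|D_X\bar u|(B)\leq 2\|D^\ap_Xu\|_{L^1}$, and Theorem \ref{teo_poincarebv} concludes. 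You, by contrast, work level set by level set: you split $P_X(\{u>t\},B)=a(t)+j(t)$, use the isoperimetric inequality to get $\pxu(\J_u)\leq\tfrac12 P_X(\{u>t\},B)$ for a.e.\ $t\in(\tau_1,\tau_2)$, and separately argue $j(t)\leq\pxu(\J_u)$ by identifying $|D_X\chi_{\{u>t\}}|\res\J_u$ with $\pxu\res(\partial^*\{u>t\}\cap\J_u)$; integrating then gives the same bound $|D_X\bar u|(B)\leq 2\int_\R a(t)\,dt=2\|D^\ap_Xu\|_{L^1}$. The two approaches use the same three tools (coarea, isoperimetric inequality, $\bv_X$ Poincar\'e), and both end up with the key estimate $|D_X\bar u|(B)\leq 2\|D^\ap_Xu\|_{L^1}$; but they distribute the work differently. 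What your route buys is a pointwise-in-$t$ inequality and complete avoidance of the quantity $\tau_2-\tau_1$. What it costs is the measure comparison $|D_X\chi_{\{u>t\}}|\res\J_u\leq\pxu$, which you rightly flag as the delicate point: it needs the a.e.\ identification of the density $\theta$ in Definition \ref{def_pxu} (i.e.\ the well-posedness of $\mathcal P^X_R$ independently of the covering hypersurfaces, combined with the a.e.\ coincidence of the horizontal normals of $\partial^*\{u>t\}$ and $\J_u$ on their intersection). The paper's use of Proposition \ref{prop_maxmin} bypasses this entirely, since the only information about the jump of $\bar u$ it needs is the elementary pointwise bound $|\bar u^+-\bar u^-|\leq\tau_2-\tau_1$. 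If you wanted to tighten your write-up, this density identification is the step that would deserve a full argument along the lines of the uniqueness remark in Definition \ref{def_pxu}; as sketched it is plausible but not yet a proof.
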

\begin{proof}
    As we mentioned before, $\tau_1(u) \leq m \leq \tau_2(u)$. Up to a translation, we may assume $m=0$. By Remark \ref{rem_maxmin} $\bar u \in \sbv_X(B)$, therefore  
    \[
|D_X \bar u|(B) \leq \nl  D^\ap_X \bar u \nr_{L^1(B)}+\int_{\J_{\bar u}} | \bar u^+ - \bar u^-|d\pxu.
\]
By Proposition \ref{prop_maxmin} we get
\begin{equation}\label{eq_stimabasso}
|D_X \bar u|(B) \leq \nl  D^\ap_X  u \nr_{L^1(B)}+ (\tau_2(u)-\tau_1(u))\pxu(\J_u).
\end{equation}
By the coarea formula (Theorem \ref{teo_coarea1}) and the isoperimetric inequality (Theorem \ref{teo_isoineq}), we obtain
\begin{align}\label{eq_stimaalto}
|D_X \bar u|(B)& = \int_{\tau_1(u)}^{\tau_2(u)} P_X (\{ \bar u < t \},B)dt \\ &\geq \frac{1}{C_K} \int^0_{\tau_1(u)} \leb^n(\{ \bar u < t   \} \cap B)^\frac{Q-1}{Q}dt+\frac{1}{C_K}\int_0^{\tau_2(u)} \leb^n(B  \setminus \{ \bar u < t   \})^\frac{Q-1}{Q}dt. \nonumber
\end{align}
By the definitions of $\tau_1,\tau_2$ and \eqref{eq_condhj} we have, for $\tau_1(u)<t<0$,
\begin{equation}\label{eq_esttau1}
    \frac{1}{C_K}\leb^n(\{\bar u<t \} \cap B)^\frac{Q-1}{Q} \geq 2\pxu(\J_u),
\end{equation}
and for $0<t<\tau_2(u)$,
\begin{equation}\label{eq_esttau2}
    \frac{1}{C_K} \leb^n(B \setminus \{ \bar u <t \})^\frac{Q-1}{Q} \geq 2\pxu(\J_u).
\end{equation}
Putting \eqref{eq_esttau1} and \eqref{eq_esttau2} into \eqref{eq_stimaalto} one obtains
\begin{equation}\label{eq_stimaalto2}
    |D_X\bar u|(B) \geq 2 (\tau_2(u)-\tau_1(u))\pxu(\J_u).
\end{equation}
From \eqref{eq_stimabasso} and \eqref{eq_stimaalto2} we obtain the inequality
\[
(\tau_2(u)-\tau_1(u))\pxu(\J_u) \leq  \nl D_X^\ap u \nr_{L^1(B)},
\]
and, confronting again with \eqref{eq_stimabasso}, we obtain
\begin{equation}\label{eq_aux1}
|D_X \bar u|(B) \leq 2  \nl D_X^\ap  u \nr_{L^1(B)}.
\end{equation}
By Theorem \ref{teo_poincarebv} we have
\[
\left( \int_B | \bar u|^\frac{Q}{Q-1}d\leb^n  \right)^\frac{Q-1}{Q} \leq C_K|D_X \bar u|(B)
\]
and, combining the latter with \eqref{eq_aux1}, we get
\[
\left(  \int_B |\bar u|^\frac{Q}{Q-1} d\leb^n  \right)^\frac{Q-1}{Q} \leq 2C_K  \nl D^\ap_X u \nr_{L^1(B)},
\]
concluding the proof.
\end{proof}


\begin{thebibliography}{10}

\bibitem{albertimantegazza}
{\sc Alberti, G., and Mantegazza, C.}
\newblock A note on the theory of {SBV} functions.
\newblock {\em Boll. Un. Mat. Ital. B (7) 11}, 2 (1997), 375--382.

\bibitem{ambrosio89}
{\sc Ambrosio, L.}
\newblock A compactness theorem for a new class of functions of bounded variation.
\newblock {\em Boll. Un. Mat. Ital. B (7) 3}, 4 (1989), 857--881.

\bibitem{ambrosio95}
{\sc Ambrosio, L.}
\newblock A new proof of the {SBV} compactness theorem.
\newblock {\em Calc. Var. Partial Differential Equations 3}, 1 (1995), 127--137.

\bibitem{AmbrosioAhlfors}
{\sc Ambrosio, L.}
\newblock Some fine properties of sets of finite perimeter in {A}hlfors regular metric measure spaces.
\newblock {\em Adv. Math. 159}, 1 (2001), 51--67.

\bibitem{afp}
{\sc Ambrosio, L., Fusco, N., and Pallara, D.}
\newblock {\em Functions of bounded variation and free discontinuity problems}.
\newblock Oxford Mathematical Monographs. The Clarendon Press, Oxford University Press, New York, 2000.

\bibitem{agm15}
{\sc Ambrosio, L., Ghezzi, R., and Magnani, V.}
\newblock B{V} functions and sets of finite perimeter in sub-{R}iemannian manifolds.
\newblock {\em Ann. Inst. H. Poincar\'e{} C Anal. Non Lin\'eaire 32}, 3 (2015), 489--517.

\bibitem{am03}
{\sc Ambrosio, L., and Magnani, V.}
\newblock Weak differentiability of {BV} functions on stratified groups.
\newblock {\em Math. Z. 245}, 1 (2003), 123--153.

\bibitem{as10}
{\sc Ambrosio, L., and Scienza, M.}
\newblock Locality of the perimeter in {C}arnot groups and chain rule.
\newblock {\em Ann. Mat. Pura Appl. (4) 189}, 4 (2010), 661--678.

\bibitem{bu95}
{\sc Biroli, M., and Mosco, U.}
\newblock Sobolev and isoperimetric inequalities for {D}irichlet forms on homogeneous spaces.
\newblock {\em Atti Accad. Naz. Lincei Cl. Sci. Fis. Mat. Natur. Rend. Lincei (9) Mat. Appl. 6}, 1 (1995), 37--44.

\bibitem{bmp12}
{\sc Bramanti, M., Miranda, Jr., M., and Pallara, D.}
\newblock Two characterization of {BV} functions on {C}arnot groups via the heat semigroup.
\newblock {\em Int. Math. Res. Not. IMRN}, 17 (2012), 3845--3876.

\bibitem{cdg}
{\sc Capogna, L., Danielli, D., and Garofalo, N.}
\newblock The geometric {S}obolev embedding for vector fields and the isoperimetric inequality.
\newblock {\em Comm. Anal. Geom. 2}, 2 (1994), 203--215.

\bibitem{CapGarAhlfors}
{\sc Capogna, L., and Garofalo, N.}
\newblock Ahlfors type estimates for perimeter measures in {C}arnot-{C}arath\'eodory spaces.
\newblock {\em J. Geom. Anal. 16}, 3 (2006), 455--497.

\bibitem{cm20}
{\sc Comi, G.~E., and Magnani, V.}
\newblock The {G}auss-{G}reen theorem in stratified groups.
\newblock {\em Adv. Math. 360\/} (2020), 106916, 85.

\bibitem{dgn98}
{\sc Danielli, D., Garofalo, N., and Nhieu, D.-M.}
\newblock Trace inequalities for {C}arnot-{C}arath\'eodory spaces and applications.
\newblock {\em Ann. Scuola Norm. Sup. Pisa Cl. Sci. (4) 27}, 2 (1998), 195--252.

\bibitem{DanGarNhi}
{\sc Danielli, D., Garofalo, N., and Nhieu, D.-M.}
\newblock Non-doubling {A}hlfors measures, perimeter measures, and the characterization of the trace spaces of {S}obolev functions in {C}arnot-{C}arath\'eodory spaces.
\newblock {\em Mem. Amer. Math. Soc. 182}, 857 (2006), x+119.

\bibitem{dgcl}
{\sc De~Giorgi, E., Carriero, M., and Leaci, A.}
\newblock Existence theorem for a minimum problem with free discontinuity set.
\newblock {\em Arch. Rational Mech. Anal. 108}, 3 (1989), 195--218.

\bibitem{sbvx}
{\sc Di~Marco, M., Don, S., and Vittone, D.}
\newblock S{BV} functions in {C}arnot--{C}arath\'eodory spaces.
\newblock {\em Nonlinear Anal. 262\/} (2026), Paper No. 113944.

\bibitem{dontesi}
{\sc Don, S.}
\newblock {F}unctions of bounded variation in {C}arnot-{C}arathéodory spaces, 2019.
\newblock PhD Thesis available online at \url{https://dottorato.math.unipd.it/sites/default/files/tesi_definitiva_Sebastiano_Don.pdf}.

\bibitem{dmv19}
{\sc Don, S., Massaccesi, A., and Vittone, D.}
\newblock Rank-one theorem and subgraphs of {BV} functions in {C}arnot groups.
\newblock {\em J. Funct. Anal. 276}, 3 (2019), 687--715.

\bibitem{dv19}
{\sc Don, S., and Vittone, D.}
\newblock A compactness result for {BV} functions in metric spaces.
\newblock {\em Ann. Acad. Sci. Fenn. Math. 44}, 1 (2019), 329--339.

\bibitem{dv}
{\sc Don, S., and Vittone, D.}
\newblock Fine properties of functions with bounded variation in {C}arnot-{C}arath\'{e}odory spaces.
\newblock {\em J. Math. Anal. Appl. 479}, 1 (2019), 482--530.

\bibitem{fgw94}
{\sc Franchi, B., Gallot, S., and Wheeden, R.~L.}
\newblock Sobolev and isoperimetric inequalities for degenerate metrics.
\newblock {\em Math. Ann. 300}, 4 (1994), 557--571.

\bibitem{fssc}
{\sc Franchi, B., Serapioni, R., and Serra~Cassano, F.}
\newblock Meyers-{S}errin type theorems and relaxation of variational integrals depending on vector fields.
\newblock {\em Houston J. Math. 22}, 4 (1996), 859--890.

\bibitem{fssc01}
{\sc Franchi, B., Serapioni, R., and Serra~Cassano, F.}
\newblock Rectifiability and perimeter in the {H}eisenberg group.
\newblock {\em Math. Ann. 321}, 3 (2001), 479--531.

\bibitem{fssc03}
{\sc Franchi, B., Serapioni, R., and Serra~Cassano, F.}
\newblock On the structure of finite perimeter sets in step 2 {C}arnot groups.
\newblock {\em J. Geom. Anal. 13}, 3 (2003), 421--466.

\bibitem{gn96}
{\sc Garofalo, N., and Nhieu, D.-M.}
\newblock Isoperimetric and {S}obolev inequalities for {C}arnot-{C}arath\'eodory spaces and the existence of minimal surfaces.
\newblock {\em Comm. Pure Appl. Math. 49}, 10 (1996), 1081--1144.

\bibitem{garofalonhieu}
{\sc Garofalo, N., and Nhieu, D.-M.}
\newblock Lipschitz continuity, global smooth approximations and extension theorems for {S}obolev functions in {C}arnot-{C}arath\'{e}odory spaces.
\newblock {\em J. Anal. Math. 74\/} (1998), 67--97.

\bibitem{magnani02}
{\sc Magnani, V.}
\newblock {\em Elements of geometric measure theory on sub-{R}iemannian groups}.
\newblock Scuola Normale Superiore, Pisa, 2002.

\bibitem{marchi14}
{\sc Marchi, M.}
\newblock Regularity of sets with constant intrinsic normal in a class of {C}arnot groups.
\newblock {\em Ann. Inst. Fourier (Grenoble) 64}, 2 (2014), 429--455.

\bibitem{Selby}
{\sc Selby, C.}
\newblock An extension and trace theorem for functions of {H}-bounded variation in {C}arnot groups of step 2.
\newblock {\em Houston J. Math. 33}, 2 (2007), 593--616.

\bibitem{sy03}
{\sc Song, Y.~Q., and Yang, X.~P.}
\newblock {$BV$} functions in the {H}eisenberg group {$H^n$}.
\newblock {\em Chinese Ann. Math. Ser. A 24}, 5 (2003), 541--554.

\bibitem{vittone2012}
{\sc Vittone, D.}
\newblock Lipschitz surfaces, perimeter and trace theorems for {BV} functions in {C}arnot-{C}arath\'{e}odory spaces.
\newblock {\em Ann. Sc. Norm. Super. Pisa Cl. Sci. (5) 11}, 4 (2012), 939--998.

\end{thebibliography}
\end{document}